\newtheorem{theorem}{Theorem}   
\newtheorem{remark}[theorem]{Remark}
\newtheorem{proposition}[theorem]{Proposition} 
\newtheorem{lemma}[theorem]{Lemma}               
\newtheorem{corollary}[theorem]{Corollary}
\theoremstyle{definition}
\newcommand{\E}{\mathbb{E}}
\def\CC{\mathbb{C}}
\def\EE{\mathbb{E}}
\def\NN{\mathbb{N}}
\def\PP{\mathbb{P}}
\def\RR{\mathbb{R}}
\def\RRd1{\mathbb{R}^{d+1}}
\def\vol{\textup{vol}}
\def\conv{\textup{conv}}
\begin{document}

\title{\bfseries On the moments of the volume for random convex chains}

\author{Anna Gusakova\footnotemark[1],\; Anna Muranova\footnotemark[2]}

\date{}
\renewcommand{\thefootnote}{\fnsymbol{footnote}}
\footnotetext[1]{University of Münster, Institut für Mathematische Stochastik, Orl\'eans-Ring 10, 48149 Münster, Germany Email: gusakova@uni-muenster.de}
\footnotetext[2]{Uniwersytet Warmińsko-Mazurski, 
Wydział Matematyki i Informatyki,
ul. Słoneczna 54, 
10-710 Olsztyn, Poland. Email: anna.muranova@matman.uwm.edu.pl}

\maketitle

\begin{abstract}
Let $T$ be the triangle in the plane with vertices $(0, 0)$, $(0,1)$ and $(0, 1)$. The convex
hull $T_n$ of points $(0, 1)$, $(1, 0)$ and $n$ independent random points uniformly distributed in $T$ is
the random convex chain. In this paper we study the moments of the volume of random polytope $T_n$ and derive exact formulas for $k$-th moments for any integer $k\ge 0$. As an intermediate result, we find an explicit representation for the probability generating function of the number of vertices of $T_n$, from which an alternative formula for the probability that $T_n$ has $k$ vertices follows.\\

\noindent {\bf Keywords}. {Random convex chain, generating function, probability generating function, hypergeometric function, moments, volume.} \\
{\bf MSC}. Primary  52A22, 60D05; Secondary 33C05, 05A15.
\end{abstract}

\section{Introduction}

Let $K\subset \RR^d$ be a convex body and let $X_1,\ldots,X_n$ be independent random points uniformly distributed in $K$. Denote by $K_n$ the convex hull of $X_1,\ldots,X_n$, which is a random convex polytope contained in $K$. This model of random polytope has a very long and extensive history of studying going back to the work of Renyi and Sulanke \cite{RS63} and Sylvester's four point problem \cite{Syl64, Pfi89}. For an overview on the topic we refer reader to \cite{Schneider2008} and references therein. 

Among the most studied functionals of $K_n$ is the volume $\vol(K_n)$. It appeared to be very hard problem to find an explicit expression for the moments $\EE[\vol(K_n)^k]$, $k\in\mathbb{N}$. Even for the expected volume $\EE[\vol(K_n)]$ there are just a few results available in the literature, namely in case when $K$ is a ball \cite{Aff88}, $K$ is a planar triangle or parallelogram \cite{Buch84}, $K$ is a three-dimensional simplex \cite{BR01}. For moments of higher order the exact formulas exist only for $n=d+1$, in which case $K_{d+1}$ is a random simplex. Again in this case only some special types of convex bodies $K$ can be treated (see i.e. \cite{Beck24} and references therein). Another functional of interest is the number of vertices $f_0(K_n)$ of the random polytope $K_n$. The first moments of $\vol(K_n)$ and $f_0(K_n)$ are related via classical Efron's identity \cite{Efr65}
\begin{equation}\label{eq:Efron}
{\EE[\vol(K_n)]\over \vol(K)}=1-{\EE[f_0(K_{n+1})]\over n+1}.
\end{equation}
A generalized form of this identity for higher moments was obtained by Buchta \cite{Buch05, Buch21}
\[
{\EE[\vol(K)^k]\over \vol(K)^k}=\EE\Big[\prod_{i=1}^k\left(1-{ f_0(K_{n+k})\over n+i}\right)\Big].
\]

The reason for the lack of exact formulas is the fact that the distributions of random variables $\vol(K_n)$ and $f_0(K_n)$ depend strongly on the boundary structure of the contained body $K$. In case of an arbitrary $K$ it is basically impossible to derive an exact expression even for $\EE[\vol(K_n)]$ and the only treatable cases are the convex bodies $K$, whose boundary has sufficiently simple structure, like ball or simplex. While the exact expressions are often too complicated to obtain, an alternative question of studying the asymptotic probabilistic behavior of $\vol(K_n)$ or, more precisely, of $\vol(K\setminus K_n)$, as $n\to\infty$ appeared to be more treatable, resulting in a big amount of articles (see i.e. \cite{Schneider2008} and references therein). Despite this the exact asymptotic for $\EE[\vol(K\setminus K_n)^k]$ is known only for $k=1$ and $k=2$ and in case of some special families of convex bodies $K$, like smooth convex bodies or polytopes.

\paragraph{The model.} In this article we consider a special model of random polygones, which appeared to be more accessible for exact computations. Let $T\subset \RR^2$ be a triangle with vertices $(0,1)$, $(0,0)$ and $(1,0)$ and let $X_1,\ldots,X_n$, $n\in\NN$ be independent random points, uniformly distributed in $T$. Consider a random polygone $T_n$, which is the convex hull of points $X_1,\ldots, X_n$ together with the corners $(0,1)$ and $(1,0)$ of triangle $T$. The sequence of vertices of $T_n$ forms a random convex chain and in this article we will abuse the notation slightly and use the name random convex chain to denote the random polygone $T_n$ itself. In \cite{Buchta06} Buchta has determined the precise distribution of the number $N_n:=f_0(T_n)-2$ of "true" vertices of $T_n$, namely he showed that for $k=1,\ldots,n$ it holds
\begin{equation}\label{eq:Probabilities}
\begin{split}
p_k^{(n)}:=\PP(N_n=k) &= 2^k\sum_{i_1+\ldots+i_k=n}{1\over i_1(i_1+i_2)\cdots(i_1+\ldots+i_k)}\\
&\hspace{3cm}\times{i_1\cdots i_k\over (i_1+1)(i_1+i_2+1)\cdots(i_1+\ldots+i_k+1)},
\end{split}
\end{equation}
where the sum runs over all integers $i_1,\ldots,i_k\geq 1$ satisfying $i_1+\ldots+i_k=n$. It should be noted that the value $p_n^{(n)}={2^n\over n!(n+1)!}$ was earlier determined by B\'ar\'any, Roter, Steiger and Zhang \cite{BRSZ00} and $p_k^{(n)}=0$ for any $k\not\in\{1,\ldots,n\}$, $n\ge 1$; $p_k^{(0)}:=0$, $k\ge 1$; and $p_0^{(0)}:=1$. Moreover it is easy to check that $p_1^{(n)}={2\over n+1}$. For other values of $k$ the explicit expression \eqref{eq:Probabilities} is very hard to work with, but in the subsequent paper \cite{Buchta12} Buchta has shown that $p_{k}^{(n)}$, $n\in\NN$, $1\leq k\leq n$ satisfy the following recurrent relation
\begin{multline}\label{eq::reccforp}
\left(\frac{{n(n+1)}}{2}p^{(n)}_k - \frac{{(n-1)n}}{2}p^{(n-1)}_k\right)-\left(\frac{{(n-1)n}}{2}p^{(n-1)}_k- \frac{{(n-2)(n-1)}}{2}p^{(n-2)}_k \right)= p^{(n-1)}_{k-1},
\end{multline}
which in turn transferred to the recurrent relation for the moments $\EE[N_n^m]$, $m\ge 1$, $n\in\NN$ of $N_n$. Using this recurrent relation Buchta gave an explicit expression for the first few moments of $N_n$, namely
\begin{align}
\EE [N_n] &= {2\over 3}H_n + {1\over 3},\label{eq:ExpN}\\
\EE [N_n^2] &= {4\over 9}H_n^2 + {22\over 27}H_n+{4\over 9}H_n^{(2)}-{25\over 27}+{4\over 9(n+1)},\notag
\end{align}
where
\begin{equation}\label{eq:HarmonicNumbers}
H_n:=\sum_{i=1}^n{1\over i},\qquad H_n^{(k)}:=\sum_{i=1}^n{1\over i^k},\qquad k\ge 1,n\ge 1,
\end{equation}
are harmonic and generalised harmonic numbers, respectively. In principle, it would be possible to compute higher moments as well, but the formula become very involved. On the other hand the asymptotic relation as $n\to\infty$ takes a simple form \cite[Theorem 3]{Buchta12}
\begin{align*}
	\EE [N_n^k] = \Big({2\over 3}\log n\Big)^k + O((\log n)^{k-1}),\qquad k\ge 1.
\end{align*}

The relation \eqref{eq:Probabilities} turned out to be extremely important while studying the properties of the convex chain $T_n$. Thus, in \cite{GT21} it was used to obtain the recurrence relation for the probability generating function $G_n(t):=\sum_{k=1}^nt^{k}p_{k}^{(n)}$ of $N_n$, which is of the form 
\begin{equation}\label{eq::recforGnt}
G_n(t)=\left(\dfrac{2}{n(n+1)}t+\dfrac{2(n-1)}{n+1}\right)G_{n-1}(t)-\dfrac{(n-1)(n-2)}{n(n+1)}G_{n-2}(t),
\end{equation}
with $G_0(t)=1, G_1(t)=t$. The latter in particular implies that the sequence of monic polynomials $((p_n^{(n)})^{-1}G_n)_{n\in\NN}$ is the family of orthogonal polynomials and, thus, for any $n\in\NN$ the polynomial $G_n$ has only negative real roots. This leads to the surprising fact that $N_n$ has the same distribution as the sum of independent Bernoulli random variables, which allowed to derive a number of probabilistic limit theorems for $N_n$ including the quantitative central limit theorem as $n\to\infty$ with optimal speed of convergence $(\log(n))^{-1/2}$ in Kolmogorov distance (see \cite{GT21} for more details). The geometric interpretation of this stochastic representation remains unclear.

As an additional motivation to study random polygone $T_n$ let us point out that Groeneboom \cite{Groeneboom88} shows that if the container body $K$ is a polygone itself, then the vertices of a random polygone $K_n$ accumulate close to the corners of $K$ and that the behaviour in different corners is essentially independent. This motivates the study of $K_n$ in the neighbourhood of a single corner, which by affine invariance of $f_0(K_n)$ can be identified with the apex of a right angle $T$. Groeneboom used this idea to show that $f_0(K_n)$ satisfies  central limit theorem as $n\to\infty$. This approach was further successfully extended in \cite{GRT23} in order to obtained the quantitative version of the above central limit theorem with the speed of convergence of order $(\log(n))^{-1/2}$ in Kolmogorov distance. This speed of convergence is believed to be optimal and the proof relies on the results from \cite{GT21} for random convex chain.

\paragraph{Main results.}

In this article we aim to study the second functional of interest, namely the area $\vol(T_n)$ of a random convex chain $T_n$. We obtain the recurrence relation for $\mathbb{E}[\vol(T_n)^k]$ of the form
\[
\E [\vol(T_n)^k]=\dfrac{2n}{(n+k)(n+k+1)}\sum_{\ell=0}^k\sum_{m=0}^{n-1}\dfrac{{{n-1}\choose{m}}{{k}\choose{\ell}}}{{{n+k}\choose{m+\ell}}}2^{\ell-k}\E [\vol(T_m)^{\ell}],
\]
stated in Theorem \ref{thm:MomentRecursion}. Solving this recurrence relation for $n=1,2$ and any $k\ge 0$ we obtain exact formulas of the form 
\begin{align*}
    \EE [\vol(T_1)^k]&={1\over (k+1)(k+2)},\qquad k\ge 0,\\
        \EE [\vol(T_2)^k]&={4(k-3)\over2^{k}(k+1)(k+2)(k+3)}+{8H_{k+2}\over 2^{k}(k+1)(k+2)^2},\qquad k\ge 0.
    \end{align*}
    Moreover, using the relation between moments of random variables $N_n$ and $\vol(T_n)$ we obtain an exact formula for moments of $\vol(T_n)$ of any order
    \[
    \EE  [\vol(T_n)^k]=\sum_{m=1}^n\sum_{\ell=m}^{n+k}(-1)^{\ell+m}2^{m-k}{{n+k-m\choose k}{n+k\choose \ell}\over {n+k\choose k}}\sum_{1\leq j_1<\ldots<j_m\leq \ell}\prod_{i=1}^m{1\over j_i(j_i+1)},\qquad k\ge 0,
    \]
    stated in Theorem \ref{tm:MomentFormulaExact}. In particular for $k=1$, $k=2$ and any $n\ge 1$ we obtain (see Corollary \ref{cor:ExpectedVolume2})
    \begin{align*}
    \EE [\vol(T_n)]&=
    {1\over 2}-{1\over 6(n+1)}-{H_{n+1}\over 3(n+1)},\\
    \EE [\vol(T_n)^2]&=
    {1\over 4}-\frac{9n^2 + 53n + 72}{54 (n+1)(n+2)(n+3)}
-{(9n+8)H_{n+2}\over 27(n+1)(n+2)}+{H^2_{n+2}+H^{(2)}_{n+2}\over 9(n+1)(n+2)}.
    \end{align*} 
    It is possible to derive similar formulas for other values of $k$ in terms of harmonic and generalised harmonic numbers, but the formulas become very involved. As a consequence we also get an exact formula for the variance of $\vol(T_n)$, namely
    \begin{align*}
    {\rm var}(\vol(T_n))&=\frac{(7n^2+24n+14)H_{n+1}}{27(n+1)^2(n+2)^2}-\frac{H_{n+1}^2}{9(n+1)^2(n+2)}+ \frac{H_{n+1}^{(2)}}{9(n+1)(n+2)}\\
&\qquad\qquad
- \frac{55 n^4 + 391 n^3 + 962 n^2 + 956 n + 336}
{108(n+1)^2(n+2)^3(n+3)}
    \end{align*}
    In particular since $H_{n}=\log n+O(1)$ we get
    \[
    \EE[\vol(T_n)]={1\over 2}-{\log n\over 3n} + O(n^{-1}),\qquad {\rm var}(\vol(T_n))={7 \log n\over 27 n^2}+O(n^{-2}).
    \]
    
    As an intermediate step for the proof of Theorem \ref{thm:MomentRecursion} we showed that the generating function of a sequence $(G_n(t))_{n\in\NN}$ for any fixed $t\in\RR$ is given by some hypergeometric function (see Theorem \ref{tm:GenrationFunction}). {This result was independently and at the same time obtained by Besau and Th\"ale \cite{BT25}, where the authors used the methods of analytic combinatorics in order to study the quantitative central limit theorem and large deviation principle for $N_n$. Apart from Theorem \ref{tm:GenrationFunction} our results are disjoint from those of \cite{BT25}. We use the generating function of $(G_n(t))_{n\in\NN}$ in order to obtain an exact formula for the function $G_n(t)$ (see Theorem \ref{tm:FunctionG}), which in turn provides an alternative formula for probabilities 
        \[
    p_{k}^{(n)}=\sum_{\ell=k}^{n}(-1)^{\ell}{n\choose \ell}\sum_{1\leq j_1<\ldots<j_k\leq \ell}\prod_{i=1}^k{2\over j_i(j_i+1)},\qquad 0\leq k\leq n,
    \]
    as stated in Corollary \ref{cor:Probabilities}.

\paragraph{Structure of the paper.} The rest of the article is organized as follows. Section 2 contains some useful facts on random convex chains and hypergeometric functions. In Section 3 we consider the sequence $q_{n,k}:={{n+k}\choose {k} }\EE[2^k\vol(T_n)^2]$, $n,k\in\NN$ and study its properties. In particular, we derive an expression for its generating function in terms of hypergeometric function. In Section 4 we derive a number of results for random convex chain.

\section{Preliminaries}

\subsection{Random convex chain}

Given a set $A\subset \RR^d$ we denote by $\conv(A)$ its convex hull. Recall that by $T$ we denote the triangle with vertices $(0,1)$, $(0,0)$, $(1,0)$ in $\RR^2$ and by $T_n$ the convex hull of independent and uniformly distributed in $T$ points $X_1,\ldots,X_n$ together with points $(0,1)$ and $(1,0)$. Denote by 
\[
V_n:=\vol(T_n)/\vol(T)=2\vol(T_n),\qquad\qquad N_n:=f_0(T_n)-2.
\]
Using this notation we have that $p_{k}^{(n)}=\PP(N_n=k)$ (where $p_{k}^{(n)}$ are given by \eqref{eq:Probabilities}) and $G_n(t)=\sum_{k=1}^np_{k}^{(n)}t^k$ is the probability generating function of the random variable $N_n$.

In what follows we will be interested in $\EE V_n^k$ for $k\in\NN$. For convenience we introduce the normalized moment sequence, namely
\begin{equation}\label{eq:A}
q_{n,k}:={{n+k}\choose {k} }\E V_n^k,\qquad k,n\in\NN.
\end{equation}
Additionally we set $q_{n,0}:=\EE V_n^0=1$ for any $n\ge 0$ and $q_{0,k}:=0$ for any $k\ge 1$. 
Let
\begin{equation}\label{eq:DefF}
Q(x,y):=\sum_{n=0}^{\infty}\sum_{k=0}^{\infty}q_{n,k}x^ny^k
\end{equation}
be the generating function of the sequence $(q_{n,k})_{k,n\ge 0}$. Note that for any $k,n\in\NN$ since $T_n\subset T$ and, hence, $\vol(T_n)\leq 1/2$ almost surely we have $q_{n,k}\leq {n+k\choose k}$. This implies that for any $x,y\in\RR$ we have
\begin{align*}
\sum_{n=0}^{\infty}\sum_{k=0}^{\infty}{n+k\choose k}|x|^n|y|^k=\sum_{k=0}^{\infty}{|y|^k\over (1-|x|)^{k+1}}={1\over 1-|y|-|x|}<\infty\qquad \text{if}\quad |x|+|y|<1
\end{align*}
and the series converges absolutely. Thus, from \eqref{eq:A} it directly follows that
\begin{equation}\label{eq::Ankk!n!}
\EE V_n^k={n+k\choose k}^{-1} q_{n,k}=\dfrac{1}{(n+k)!}\dfrac{\partial^{n+k} }{\partial y^{k}\,\partial x^{n}} Q(x,y)\Big|_{x=y=0},
\end{equation}
and knowing the function $Q(x,y)$ provides an access to the values $\EE V_n^k$.

\subsection{Hypergeometric function}

Given $a\in \CC$ and $n\in \NN$ the rising factorial of $a$ is denoted by $(a)_n=a(a+1)\cdots (a+n-1)$ and we set $(a)_0:=1$. Note that $(a)_n={\Gamma (a+n)\over \Gamma(a)}$ if $a\neq 0, -1,-2,\ldots$. 

In what follows we will use the Gauss hypergeometric function $F(a,b;c;z)$, $a,b,c\in\CC$, $c\neq 0,-1,-2,\ldots$, which is defined by Gauss series
\begin{equation}\label{eq:HypergeometricFuncDef}
F(a,b;c;z)=\sum_{n=0}^{\infty}{(a)_n(b)_n\over n!(c)_n}z^n,
\end{equation}
on the unit disc $|z|<1$ and by analytic continuation elsewhere. A few useful properties of function $F(a,b;c;z)$ are the following \cite[Equation 15.5.1]{NIST}
\begin{equation}\label{eq:HypergeometricFuncDeriv}
    {{\rm d}^n\over {\rm d}z^n}F(a,b;c;z)={(a)_n(b)_n\over (c)_n}F(a+n,b+n;c+n;z)
\end{equation}
and \cite[Equation 15.8.1]{NIST}
\begin{equation}\label{eq:HypergeometricFuncEquiv}
    F(a,b;c;z)=F(b,a;c;z)=(1-z)^{-a}F\Big(a,c-b;c;{z\over z-1}\Big)=(1-z)^{-b}F\Big(c-a,b;c;{z\over z-1}\Big).
\end{equation}
For more details and further properties of hypergeometric functions we refer to \cite[Section 15]{NIST}.

\section{Properties of the normalized moment sequence $(q_{n,k})_{n,k\ge 0}$}

In this section we study the properties of the sequence $(q_{n,k})_{n,k\ge 0}$ and in particular its generating function $Q$. We start with a few simple properties which will be useful in the sequel, followed by recurrence relation for $(q_{n,k})_{n,k\ge 0}$ and representation of generating function $Q$ in terms of hypergeometric function and probability generating functions $(G_n)_{n\ge 0}$.

\subsection{Some useful identities}

The first proposition provides a relation between the normalised moments $(q_{n,k})_{n,k\ge 0}$ and the probabilities $(p_{k}^{(n)})_{n\ge 0, 0\leq k\leq n}$.

\begin{proposition}\label{lm:RecurrenceAP}
For any $n, k\ge 0$ we have
\begin{equation}\label{eq::Anksumpi}
q_{n,k}=\sum_{i=0}^n {{n+k-i}\choose k} p_i^{(n+k)},
\end{equation}
where $q_{n,k}$ is defined by \eqref{eq:A} and $p_{k}^{(n)}$ are given in \eqref{eq:Probabilities}.
\end{proposition}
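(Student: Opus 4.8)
The plan is to prove \eqref{eq::Anksumpi} by a convex-chain analogue of Buchta's generalized Efron identity. First I would reinterpret the $k$-th moment of $V_n=2\vol(T_n)=\vol(T_n)/\vol(T)$ as a hitting probability. For fixed $X_1,\dots,X_n$, the quantity $V_n$ is exactly the probability that one further point $Y$, uniform in $T$ and independent of everything, lies in $T_n=\conv(X_1,\dots,X_n,(0,1),(1,0))$. Introducing $k$ i.i.d.\ uniform points $Y_1,\dots,Y_k$ (independent of the $X_i$) and conditioning on the $X_i$, this gives
\[
\E V_n^k=\PP\big(Y_1,\dots,Y_k\in T_n\big).
\]

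Next I would pass to the larger chain $T_{n+k}=\conv(X_1,\dots,X_n,Y_1,\dots,Y_k,(0,1),(1,0))$ built from all $n+k$ sample points. Almost surely the sample points are in general position and lie in the interior of the relevant hulls, and one checks that $\{Y_1,\dots,Y_k\in T_n\}$ holds if and only if adding the $Y_j$ does not enlarge the hull, i.e.\ $T_{n+k}=T_n$, i.e.\ none of $Y_1,\dots,Y_k$ is a vertex of $T_{n+k}$. (The two corners are always vertices and never coincide with a sample point, so they are irrelevant.) Writing $V$ for the set of those vertices of $T_{n+k}$ lying in $\{X_1,\dots,X_n,Y_1,\dots,Y_k\}$, so that $|V|=N_{n+k}$, we get
\[
\E V_n^k=\PP\big(V\cap\{Y_1,\dots,Y_k\}=\emptyset\big)=\sum_{i=0}^{n}\PP\big(V\subseteq\{X_1,\dots,X_n\},\ N_{n+k}=i\big).
\]

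The key structural input is exchangeability: since $X_1,\dots,X_n,Y_1,\dots,Y_k$ are i.i.d.\ uniform in $T$, the law of the (index) set $V$ is invariant under permutations of the $n+k$ labels; hence, conditionally on $N_{n+k}=i$, the set $V$ is uniform over the $i$-subsets of $\{X_1,\dots,X_n,Y_1,\dots,Y_k\}$. Therefore $\PP\big(V\subseteq\{X_1,\dots,X_n\}\mid N_{n+k}=i\big)=\binom{n}{i}\big/\binom{n+k}{i}$, so that
\[
\E V_n^k=\sum_{i=0}^{n}p_i^{(n+k)}\,\frac{\binom{n}{i}}{\binom{n+k}{i}} ,
\]
and multiplying by $\binom{n+k}{k}$ together with the elementary identity $\binom{n+k}{k}\binom{n}{i}\big/\binom{n+k}{i}=\binom{n+k-i}{k}$ gives \eqref{eq::Anksumpi}.

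I expect the only genuinely delicate point to be the justification of the conditional uniformity of $V$: one must argue carefully that, almost surely, $T_{n+k}$ is a nondegenerate convex chain so that $V$ and $N_{n+k}$ are well defined, and that the event ``$S$ is the set of random vertices of $T_{n+k}$'' has a probability symmetric in the $n+k$ sample points. Everything else is bookkeeping. The boundary cases should also be checked directly against the conventions in \eqref{eq:A}: for $k=0$ the right-hand side is $\sum_{i=0}^n p_i^{(n)}=1=q_{n,0}$, and for $n=0$, $k\ge 1$ it is $\binom{k}{k}p_0^{(k)}=0=q_{0,k}$, while for $n=k=0$ it is $p_0^{(0)}=1=q_{0,0}$.
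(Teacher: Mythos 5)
Your proof is correct and follows essentially the same route as the paper: both interpret $\EE V_n^k$ as the probability that $k$ additional independent uniform points fall in $T_n$, pass to the augmented chain $T_{n+k}$, and exploit exchangeability of the $n+k$ i.i.d.\ points, the only difference being bookkeeping --- the paper computes the expected number of $k$-subsets of non-vertices in two ways, obtaining $\EE\binom{n+k-N_{n+k}}{k}$ directly, while you condition on $N_{n+k}=i$, use conditional uniformity of the vertex index set, and finish with the identity $\binom{n+k}{k}\binom{n}{i}\big/\binom{n+k}{i}=\binom{n+k-i}{k}$. Your almost-sure general-position remark and the boundary-case checks for $n=0$ and $k=0$ are consistent with the paper's conventions, so nothing is missing.
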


\begin{remark}
    This kind of relation holds for many models of random polytopes, for example the identical equality relating the moments of the volume of random polytope $K_n$ with $(\PP(f_0(K_n)=k))_{0\leq k\leq n}$ follows from \cite[Theorem 1, Theorem 2]{Buch05}. 
\end{remark}

\begin{proof}
The proof of \eqref{eq::Anksumpi} is in fact identical to the proof of Theorem 1 and Theorem 2 in \cite{Buch05}. For readers convenience we present some details here.

First we note, that \eqref{eq::Anksumpi} is equivalent to
\begin{equation}\label{eq:16.07.24_1}
{n+k\choose k}{\EE [\vol(T_n)^k]\over (\vol(T))^k}=\EE{n+k-N_{n+k}\choose k}.
\end{equation}
In order to prove \eqref{eq:16.07.24_1} we proceed as follows. Let $Y_1,\ldots, Y_{n+k}$ be independent random points uniformly distributed in $T$ and let $P_{n,k}$ be the number of subsets $\{Y_{i_1},\ldots, Y_{i_k}\}\subset \{Y_1,\ldots,Y_{n+k}\}$ of distinct points such that
\[
Y_{i_1},\ldots, Y_{i_k}\in \conv(\{Y_1,\ldots,Y_{n+k}, (0,1), (1,0)\}\setminus \{Y_{i_1},\ldots, Y_{i_k}\}).
\]
Note that the above condition is equivalent to the fact that none of the points $Y_{i_1},\ldots, Y_{i_k}$ is a vertex of $\conv(Y_1, \ldots, Y_{n+k},(0,1),(1,0))$. Further since $N_{n+k}$ has the same distribution as $f_0(\conv(Y_1, \ldots, Y_{n+k},(0,1),(1,0)))$ we have that 
\begin{equation}\label{eq:16.07.24_2}
    \EE P_{n,k}=\EE {n+k-N_{n+k}\choose k}.
\end{equation}
On the other hand since $Y_1,\ldots, Y_{n+k}$ are independent and identically distributed and $T_n$ has the same distribution as $\conv(Y_1,\ldots,Y_n,(0,1),(1,0))$ we have
\begin{align*}
    \EE P_{n,k}&=\EE\sum_{1\leq i_1<\ldots<i_k\leq n+k}{\bf 1}\{Y_{i_1},\ldots, Y_{i_k}\in \conv(\{Y_1,\ldots,Y_{n+k}, (0,1), (1,0)\}\setminus \{Y_{i_1},\ldots, Y_{i_k}\})\}\\
    &={n+k\choose k}\PP(Y_{n+1},\ldots, Y_{n+k}\in T_n)={n+k\choose k}{\EE [\vol(T_n)^k]\over (\vol T)^k}.
\end{align*}
Combining this with \eqref{eq:16.07.24_2} proves \eqref{eq:16.07.24_1} and, hence, \eqref{eq::Anksumpi}.
\end{proof}

As a consequence of Proposition \ref{lm:RecurrenceAP} the next result provides a connection between the subsequence $(q_{n-k,k})_{0\leq k\leq n}$ and the probability generating function $G_n$ of the number of vertices of the random convex chain.

\begin{proposition}\label{prop:UsefulFormula1}
    For any $n\ge 0$ and $x,y\in\RR$ we have 
    \[
    \sum_{k=0}^n q_{n-k,k}x^{n-k}y^k=(x+y)^nG_n\Big({x\over x+y}\Big).
    \]
\end{proposition}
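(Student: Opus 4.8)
The plan is to read off the identity directly from Proposition \ref{lm:RecurrenceAP}, which already expresses each normalized moment as a binomial‑weighted partial sum of the probabilities $p_i^{(n)}$. Applying \eqref{eq::Anksumpi} with $n$ replaced by $n-k$ (legitimate since $0\le n-k$ in the range of interest) gives
\[
q_{n-k,k}=\sum_{i=0}^{n-k}\binom{n-i}{k}\,p_i^{(n)},\qquad 0\le k\le n.
\]
Substituting this into the left‑hand side of the asserted formula produces the finite double sum
\[
\sum_{k=0}^n q_{n-k,k}\,x^{n-k}y^k=\sum_{k=0}^n\sum_{i=0}^{n-k}\binom{n-i}{k}\,p_i^{(n)}\,x^{n-k}y^k,
\]
where the index set is $\{(i,k):i,k\ge 0,\ i+k\le n\}$; being finite, it carries no convergence issue and the order of summation may be interchanged freely.

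Interchanging, with $i$ the outer index running from $0$ to $n$ and $k$ running from $0$ to $n-i$, the inner sum becomes $\sum_{k=0}^{n-i}\binom{n-i}{k}x^{n-k}y^k$. Factoring out $x^i$ and invoking the binomial theorem, this inner sum equals $x^i(x+y)^{n-i}$, so that
\[
\sum_{k=0}^n q_{n-k,k}\,x^{n-k}y^k=\sum_{i=0}^n p_i^{(n)}\,x^i(x+y)^{n-i}.
\]

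It remains to recognize the right‑hand side as $(x+y)^nG_n\!\big(x/(x+y)\big)$. For $x+y\ne 0$ one pulls out $(x+y)^n$ and uses $\sum_{i=0}^n p_i^{(n)}t^i=G_n(t)$, valid because $p_0^{(n)}=0$ for $n\ge 1$ (and $G_0\equiv 1$ covers $n=0$). Since the right‑hand side $\sum_{i=0}^n p_i^{(n)}x^i(x+y)^{n-i}$ is manifestly a polynomial in $x,y$, and the left‑hand side is too, the identity extends to all $x,y\in\RR$, the value at $x+y=0$ being this polynomial (so the apparent singularity is removable). I do not expect a genuine obstacle here: the only points needing care are the bookkeeping of the summation ranges when swapping the two sums, the use of $p_0^{(n)}=0$ to let the $G_n$‑series start at $i=0$, and the remark that the claimed equality is really a polynomial identity so that the $x+y=0$ case is not a special case at all.
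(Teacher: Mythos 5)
Your proof is correct and follows essentially the same route as the paper: substitute the identity of Proposition \ref{lm:RecurrenceAP} (with $n$ replaced by $n-k$), interchange the finite double sum, and apply the binomial theorem to obtain $\sum_{i=0}^{n}p_i^{(n)}x^i(x+y)^{n-i}$. Your additional remarks --- that $p_0^{(n)}=0$ lets the sum match $G_n$, and that the identity is a polynomial one so the case $x+y=0$ is a removable singularity --- are careful touches the paper leaves implicit, but they do not change the argument.
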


\begin{proof}
    By \eqref{eq::Anksumpi} we get
    \begin{align*}
        \sum_{k=0}^n q_{n-k,k}x^{n-k}y^k&=\sum_{k=0}^n\sum_{i=0}^{n-k} {{n-i}\choose k} p_i^{(n)}x^{n-k}y^k\\
        &=\sum_{i=0}^{n} p_i^{(n)}x^i\sum_{k=0}^{n-i}{{n-i}\choose k}x^{n-i-k}y^k\\
        &=\sum_{i=0}^{n} (x+y)^{n-i}x^ip_i^{(n)}=(x+y)^nG_n\Big({x\over x+y}\Big).
    \end{align*}
\end{proof}

\subsection{Recurrence relation}

Next theorem provides a recurrence relation for the sequence $(q_{n,k})_{n,k\ge 0}$, similar in spirit to the relation \eqref{eq::reccforp} for the probabilities $(p_{k}^{(n)})_{n\ge 0, 0\leq k\leq n}$.

\begin{theorem}
For any $n\ge 1$, $k\ge 2$ we have
\begin{multline}\label{eq::recAnk}
q_{n,k}=\dfrac{(n+k+1)(n+k+2)}{2} q_{n+1,k}-{(n+k)(n+k+1)} \left(q_{n,k}+q_{n+1,k-1}\right)\\
+\dfrac{(n+k-1)(n+k)}{2}\left(q_{n-1,k}+2q_{n,k-1}+q_{n+1,k-2}\right),
\end{multline}
where $q_{n,k}$ is defined by \eqref{eq:A}. Moreover we have $q_{n,0}=1$ for $n\ge 0$; $q_{0,k}=0$  for $k\ge 1$; and $q_{n,1}=n+{2\over 3}-{2\over 3}H_{n+1}$, $q_{1,k}={2\over k+2}$.
\end{theorem}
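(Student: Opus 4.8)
The plan is to derive the recurrence \eqref{eq::recAnk} from the known recurrence \eqref{eq::reccforp} for the probabilities $p_k^{(n)}$ by way of the identity \eqref{eq::Anksumpi} of Proposition~\ref{lm:RecurrenceAP}. Write $m := n+k$ throughout, so that $q_{n,k} = \sum_{i=0}^{n}\binom{m-i}{k}p_i^{(m)}$. The recurrence \eqref{eq::reccforp}, after clearing the telescoped differences, reads
\[
\frac{m(m+1)}{2}p_k^{(m)} = (m-1)m\, p_k^{(m-1)} - \frac{(m-2)(m-1)}{2}p_k^{(m-2)} + p_{k-1}^{(m-1)},
\]
valid for $1\le k\le m$ (with the usual boundary conventions $p_k^{(m)}=0$ for $k\notin\{1,\dots,m\}$ and $p_0^{(0)}=1$). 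First I would fix $n\ge 1$, $k\ge 2$ and observe that the right-hand side of \eqref{eq::recAnk} is built precisely from the terms $q_{n+1,k}$, $q_{n,k}$, $q_{n-1,k}$, $q_{n+1,k-1}$, $q_{n,k-1}$, $q_{n+1,k-2}$ — and each of these is a sum over binomial coefficients $\binom{\cdot}{k}$, $\binom{\cdot}{k-1}$, $\binom{\cdot}{k-2}$ against probabilities $p_i^{(m+1)}$, $p_i^{(m)}$, $p_i^{(m-1)}$. The strategy is therefore: substitute all six $q$-terms by their $\eqref{eq::Anksumpi}$-expansions, reorganise the triple of sums so that for each index $i$ the coefficient of $p_i^{(m+1)}$, of $p_i^{(m)}$ and of $p_i^{(m-1)}$ is collected, and then use \eqref{eq::reccforp} (written in the form above for the superscripts $m+1$, $m$, $m-1$ as needed) to telescope everything down to $\sum_{i=0}^n \binom{m-i}{k}p_i^{(m)} = q_{n,k}$.

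Concretely, the combinatorial core is the Pascal-type identity $\binom{m+1-i}{k} = \binom{m-i}{k} + \binom{m-i}{k-1}$ and its iterates, which let one rewrite the $p_i^{(m+1)}$-sum against $\binom{m+1-i}{k}$ as a combination of sums against $\binom{m-i}{k}$, $\binom{m-i}{k-1}$, $\binom{m-i}{k-2}$; similarly for the shift from $m$ to $m-1$. After these reductions the coefficient in front of each $p_i^{(m+1)}$ on the right of \eqref{eq::recAnk} should match $\frac{(m+1)(m+2)}{2}\binom{m+1-i}{k}$, the coefficient in front of each $p_i^{(m)}$ should absorb the $-m(m+1)$ terms, and the $p_i^{(m-1)}$ coefficient the $\frac{(m-1)m}{2}$ terms, in such a way that applying \eqref{eq::reccforp} with $m\mapsto m+1$ collapses the whole expression. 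One must track the index ranges carefully: the sums defining $q_{n+1,k}$ and $q_{n+1,k-1}$ run $i=0,\dots,n+1$ while the one for $q_{n,k}$ runs $i=0,\dots,n$, and the extra term $i=n+1$ is handled using $p_{n+1}^{(m+1)}=p_{n+1}^{(n+k+1)}$ together with the boundary conventions; these edge contributions are exactly what produces the asymmetry between the coefficients $\tfrac12(n+k+1)(n+k+2)$, $(n+k)(n+k+1)$, $\tfrac12(n+k-1)(n+k)$ in \eqref{eq::recAnk}.

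For the stated special values: $q_{n,0}=\EE V_n^0=1$ and $q_{0,k}=0$ for $k\ge1$ are the definitional conventions recorded after \eqref{eq:A}. For $q_{n,1}$, the identity \eqref{eq::Anksumpi} gives $q_{n,1}=\sum_{i=0}^n\binom{n+1-i}{1}p_i^{(n+1)} = \sum_{i=0}^n (n+1-i)p_i^{(n+1)} = (n+1)\sum_{i=1}^{n+1}p_i^{(n+1)} - \sum_{i=1}^{n+1} i\, p_i^{(n+1)} + (\text{the }i=n+1\text{ correction})$; since $\sum_{i}p_i^{(n+1)}=1$ and $\sum_i i\,p_i^{(n+1)}=\EE N_{n+1}=\tfrac23 H_{n+1}+\tfrac13$ by \eqref{eq:ExpN}, and the omitted $i=n+1$ term contributes $0$ because $\binom{n+1-(n+1)}{1}=\binom01=0$, this yields $q_{n,1}=(n+1)-\bigl(\tfrac23 H_{n+1}+\tfrac13\bigr)=n+\tfrac23-\tfrac23 H_{n+1}$. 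For $q_{1,k}$, Proposition~\ref{prop:UsefulFormula1} with $n$ replaced by $k+1$ is not directly it; instead use \eqref{eq::Anksumpi} directly: $q_{1,k}=\sum_{i=0}^{1}\binom{k+1-i}{k}p_i^{(k+1)}=\binom{k+1}{k}p_0^{(k+1)}+\binom{k}{k}p_1^{(k+1)} = 0 + p_1^{(k+1)} = \tfrac{2}{k+2}$, using $p_0^{(k+1)}=0$ for $k+1\ge1$ and the recorded value $p_1^{(n)}=\tfrac{2}{n+1}$ with $n=k+1$.

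I expect the main obstacle to be the bookkeeping in the reorganisation step: keeping the three superscript levels $m+1,m,m-1$, the three binomial weights $\binom{\cdot}{k},\binom{\cdot}{k-1},\binom{\cdot}{k-2}$, and the boundary index contributions all aligned so that \eqref{eq::reccforp} applies termwise. It may be cleaner to introduce an auxiliary generating function in $i$ (a polynomial identity in a dummy variable whose coefficients are the $p_i^{(\cdot)}$) and verify the identity at that level, or to verify the whole recurrence by first translating \eqref{eq::reccforp} into a recurrence for the partial generating functions $\sum_i \binom{m-i}{k}p_i^{(m)}$ directly; either way the algebra is elementary but requires care with the off-by-one endpoints.
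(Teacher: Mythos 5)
Your proposal is correct and follows essentially the same route as the paper's proof: expanding all six $q$-terms via \eqref{eq::Anksumpi}, aligning the binomial weights through Pascal's rule (the paper groups, e.g., $q_{n+1,k-2}+q_{n,k-1}$ into a single sum with weight $\binom{n+k-i}{k-1}$), and then applying the rearranged recurrence \eqref{eq::reccforp} at superscript level $n+k+1$ to collapse everything to $q_{n,k}$ after an index shift. Your treatment of the special values also matches the paper's (which derives $q_{n,1}$ from $q_{n,1}=\EE[n+1-N_{n+1}]$ together with \eqref{eq:ExpN}), so the only difference is that you have described rather than fully executed the bookkeeping that the paper carries out explicitly.
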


\begin{proof}
By \eqref{eq::Anksumpi} we have the following equalities:
\begin{align}\label{eq::Ankdfifferentnk}
q_{n+1,k-1}&=\sum_{i=1}^{n+1} {{n+k-i}\choose {k-1}} p_i^{(n+k)}\\
q_{n,k-1}&=\sum_{i=1}^{n} {{n+k-1-i}\choose {k-1}} p_i^{(n+k-1)}\\
q_{n-1,k}&=\sum_{i=1}^{n-1} {{n+k-1-i}\choose {k}} p_i^{(n+k-1)}
\end{align}
Moreover, using  \eqref{eq::Anksumpi}  and the recursive relation for binomial coefficients
\begin{equation}\label{eq::chooserec}
{n \choose k}={{n-1} \choose {k-1}}+{{n-1} \choose k}={{n-1} \choose {k-1}}+{{n-2} \choose k-1}+{{n-2} \choose k}
\end{equation}
we get 
\begin{equation}
\begin{split}
q_{n+1,k}&=\sum_{i=1}^{n+1} {{n+k+1-i}\choose {k}} p_i^{(n+k+1)}\\
&=\sum_{i=1}^{n+1} {{n+k-i}\choose {k-1}} p_i^{(n+k+1)}+\sum_{i=1}^{n} {{n+k-i-1}\choose {k-1}} p_i^{(n+k+1)}\\
&\hspace{7cm}+\sum_{i=1}^{n-1} {{n+k-i-1}\choose {k}} p_i^{(n+k+1)},
\end{split}
\end{equation}
where we additionally used the fact that ${n\choose k}=0$ whenever $k>n$.
Similarly, using again \eqref{eq::Anksumpi}, \eqref{eq::chooserec}  and ${n\choose k}=0$, whenever $k>n$, we get
\begin{equation}
\begin{split}
q_{n,k}&=\sum_{i=1}^{n-1} {{n+k-i-1}\choose {k}} p_i^{(n+k)}+\sum_{i=1}^{n} {{n+k-i-1}\choose {k-1}} p_i^{(n+k)}\\
\end{split}
\end{equation}
Further, we notice that due to \eqref{eq::Anksumpi} and \eqref{eq::chooserec} we have 
\begin{equation}\label{eq::sumAA}
\begin{split}
q_{n+1,k-2}+q_{n,k-1}&=\sum_{i=1}^{n+1} {{n+k-i-1}\choose {k-2}} p_i^{(n+k-1)}+\sum_{i=1}^{n} {{n+k-i-1}\choose {k-1}} p_i^{(n+k-1)}\\
&=\sum_{i=1}^{n+1} {{n+k-i}\choose {k-1}} p_i^{(n+k-1)}.
\end{split}
\end{equation}
Finally, using  \eqref{eq::Ankdfifferentnk} -- \eqref{eq::sumAA} and the equation \eqref{eq::reccforp} we get, that right hand side $R$ of \eqref{eq::recAnk} is equal to
\begin{align*}
R &= \sum_{i=1}^{n-1} {{n+k-1-i}\choose {k}} p_{i-1}^{(n+k)}+\sum_{i=1}^{n} {{n+k-1-i}\choose {k-1}} p_{i-1}^{(n+k)}+\sum_{i=1}^{n+1} {{n+k-i}\choose {k-1}} p_{i-1}^{(n+k)}\\
&=\sum_{i=1}^{n+1} {{n+k-i+1}\choose {k}} p_{i-1}^{(n+k)},
\end{align*}
where in the last line we again have used \eqref{eq::chooserec} and ${n\choose k}=0$, whenever $k>n$. Now we make the change of numeration $j:=i-1$ in the last equation and, using that $p^{(j)}_0=0$ for $j\ge 1$ we obtain
$$
R=\sum_{j=1}^{n} {{n+k-j}\choose {k}} p_{j}^{(n+k)}=q_{n,k},
$$
which finishes the proof of recurrence relation.

The equality 
\[
q_{n,1}=n+{2\over 3}-{2\over 3}H_{n+1}
\]
follows from $q_{n,1}=\EE[n+1-N_{n+1}]$, which holds due to \eqref{eq:16.07.24_1}, and \eqref{eq:ExpN}.
\end{proof}

\subsection{Generating function}

In this subsection we study properties of the generating function $Q(x,y)$ of the sequence $(q_{n,k})_{n,k\ge 0}$. Let us point out that in contrast to the recurrence relation in the previous section the generating function $Q(x,y)$ encodes the information on the whole sequence $(q_{n,k})_{n,k\ge 0}$. 

Motivated by Proposition \ref{prop:UsefulFormula1} we start by showing the following identity relating the generating function $Q(x,y)$ with the sequence of probability generating functions $(G_n)_{n\ge 0}$.

\begin{proposition}\label{proposition6}
For any $t,s\in\RR$ with $|x|+|y|<1$ we have
\[
Q(x,y)=\sum_{\ell=0}^{\infty}(x+y)^{\ell}G_{\ell}\Big({x\over x+y}\Big).
\]
\end{proposition}

\begin{proof}
We start by noting that according to \eqref{eq:DefF} and setting $\ell=n+k$ we get
\[
Q(x,y)=\sum_{n=0}^{\infty}\sum_{k=0}^{\infty}q_{n,k}x^ny^k=\sum_{\ell=0}^{\infty}\sum_{k=0}^{\ell}q_{\ell-k,k}x^{\ell-k}y^k.
\]
The statement now follows by Proposition \ref{prop:UsefulFormula1}.
\end{proof}

Further given $s\in\RR$ let 
\begin{equation}\label{eq:LfunctionDef}
L(s;z):=\sum_{\ell=0}^{\infty}z^{\ell}G_{\ell}(s)
\end{equation}
be the generating function of the sequence $(G_{\ell}(s))_{\ell\ge 0}$. We note that by Proposition \ref{prop:UsefulFormula1} we have 
\[
Q(x,y)=L\Big({x\over x+y};x+y\Big),\qquad L(s;z)=Q(s z,(1-s)z).
\]
The main result of this subsection is the following theorem.

\begin{theorem}\label{tm:GenrationFunction}
For any $s\in \RR$ and $|z|<(|s|+|1-s|)^{-1}$ we have that the series in \eqref{eq:LfunctionDef} converges absolutely and
\begin{align*}
L(s;z)&=(1-z)^{\beta}F(\beta,\beta+1;2;z)=F\Big(\beta,1-\beta;2;{z\over z-1}\Big),
\end{align*}
where $\beta={1\over 2}+{1\over 2}\sqrt{1+8s}$ and $F$ is the hypergeometric function defined by \eqref{eq:HypergeometricFuncDef}.
\end{theorem}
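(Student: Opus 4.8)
The plan is to convert the three-term recurrence \eqref{eq::recforGnt} for the polynomials $G_n(t)$ into a differential equation for the generating function $L(s;z)=\sum_{\ell\ge 0}z^\ell G_\ell(s)$ with $s$ playing the role of the fixed parameter $t$, and then to recognize the resulting ODE as the hypergeometric equation. First I would establish absolute convergence: since $G_\ell(s)=\sum_{k=1}^\ell p_k^{(\ell)}s^k$ and the $p_k^{(\ell)}$ are probabilities, $|G_\ell(s)|\le \sum_k p_k^{(\ell)}|s|^k\le (|s|\vee 1)^\ell$, and more precisely, writing $s=\sigma(1-\sigma)^{-1}$ style bounds via Proposition~\ref{proposition6}, one gets $|G_\ell(s)|\le (|s|+|1-s|)^\ell$ (this matches the radius $(|s|+|1-s|)^{-1}$ in the statement and can be read off from the bound $q_{n,k}\le\binom{n+k}{k}$ together with Proposition~\ref{prop:UsefulFormula1}). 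Hence the series converges absolutely and defines an analytic function of $z$ on that disc.

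Next I would multiply \eqref{eq::recforGnt}, i.e. $n(n+1)G_n(t) = \bigl(2t+2(n-1)n\bigr)G_{n-1}(t) - (n-1)(n-2)G_{n-2}(t)$, by $z^n$ and sum over $n\ge 2$, using the initial data $G_0=1$, $G_1=t$. The key bookkeeping step is to express each coefficient $n(n+1)$, $2(n-1)n$, $(n-1)(n-2)$, after the appropriate index shift, in terms of the differential operators $\theta:=z\frac{d}{dz}$ (which reads off the power of $z$) acting on $L$. Concretely, $\sum_{n\ge 2} n(n+1)G_n z^n$ becomes $(\theta^2+\theta)$ applied to $L$ minus the boundary terms for $n=0,1$; $\sum_{n\ge 2} 2(n-1)n\,G_{n-1}z^n = 2z\sum_{m\ge 1} m(m+1)G_m z^m = 2z(\theta^2+\theta)L$ minus boundary; and $\sum_{n\ge 2}(n-1)(n-2)G_{n-2}z^n = z^2\sum_{m\ge0} m(m+1)G_m z^m = z^2(\theta^2+\theta)L$; while the term $2t\,G_{n-1}z^n$ sums to $2tz\,L$. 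Collecting everything and carefully cancelling the low-order boundary contributions (here $G_0=1$, $G_1=s$ must make the $n=0,1$ discrepancies vanish — this is the routine but error-prone bit), one arrives at a second-order linear ODE of the form $z(1-z)^2 L'' + (\text{linear in }z)\,L' + (\text{constant})\,L = 0$, whose indicial/parameter data should match the hypergeometric equation for $F(\beta,\beta+1;2;z)$ after the prefactor substitution $L=(1-z)^\beta M$.

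Then I would substitute $L(s;z) = (1-z)^\beta M(z)$ and choose $\beta$ so that the $M$-equation is exactly Gauss's equation $z(1-z)M'' + (c-(a+b+1)z)M' - abM=0$ with $c=2$, $a=\beta$, $b=\beta+1$. Matching the constant term forces $ab=\beta(\beta+1)$ to be tied to $2s$: specifically one expects $\beta(\beta-1)=2s$, i.e. $\beta=\tfrac12+\tfrac12\sqrt{1+8s}$, exactly as claimed. Having pinned down the ODE and the exponent, uniqueness of the analytic solution with $L(s;0)=G_0(s)=1$ and $L'(s;0)=G_1(s)=s$ (the second condition should be automatically consistent once $\beta$ is fixed, providing a useful sanity check) identifies $L(s;z)$ with the stated hypergeometric expression on the disc of convergence; the second equality $L(s;z)=F(\beta,1-\beta;2;\frac{z}{z-1})$ is then immediate from the Pfaff/Euler transformation \eqref{eq:HypergeometricFuncEquiv}.

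The main obstacle I anticipate is not conceptual but organizational: correctly handling the finitely many boundary terms when translating the recurrence (valid for $n\ge 2$) into an identity of formal/analytic power series, and verifying that the small-$z$ data $G_0=1$, $G_1=s$ are precisely what is needed for the boundary terms to cancel and for the selected solution of the hypergeometric equation (rather than the second, logarithmic solution, since $c=2$ is a positive integer) to be the relevant one. A secondary point requiring a little care is justifying termwise differentiation and the manipulations with $\theta$ inside the disc $|z|<(|s|+|1-s|)^{-1}$, which follows from the absolute and locally uniform convergence established in the first step.
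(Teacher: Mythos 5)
Your proposal is correct, and from the ODE onward it coincides with the paper's proof; the difference lies in how the ODE is reached. The paper first builds the two-variable generating function $Q(x,y)$, proves Lemma \ref{lm:QtildeLemma} and the parabolic PDE \eqref{eq::diffPts} for $Q$, and only then restricts to the ray $(x,y)=(sz,(1-s)z)$ to obtain $zL''+2L'-\tfrac{2s}{(1-z)^2}L=0$ with $L(s;0)=1$, $L'_z(s;0)=s$. You instead clear denominators in \eqref{eq::recforGnt}, multiply by $z^n$, sum over $n\ge 2$ and use $\theta=z\tfrac{d}{dz}$; this bookkeeping does close up as you anticipate: the boundary term $-2sz$ from $\sum_{n\ge2}n(n+1)G_nz^n$ cancels against the $-2sz$ hidden in the right-hand side (note your ``$2tz\,L$'' should be $2sz(L-1)$), leaving $(1-z)^2(\theta^2+\theta)L=2szL$, which is exactly the paper's ODE. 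In substance this one-variable computation is Lemma \ref{lm:QtildeLemma} specialized to the ray, so your route buys a shorter derivation that avoids the PDE for $Q$ and the two-variable termwise-differentiation justifications (the PDE is not reused elsewhere in the paper), while the paper's detour records the identity for $Q$ itself. From the ODE on, both arguments are identical: substitute $L=(1-z)^\beta w$ with $\beta(\beta-1)=2s$, i.e.\ $\beta=\tfrac12+\tfrac12\sqrt{1+8s}$, recognize Gauss's equation with $a=\beta$, $b=\beta+1$, $c=2$, discard the second fundamental solution at $z=0$ (singular like $z^{-1}$ or logarithmic, since $c=2\in\NN$) via boundedness and $L(s;0)=1$, and finish with the Pfaff transformation \eqref{eq:HypergeometricFuncEquiv}; your observation that $L'_z(s;0)=s$ is then automatic is also how the paper's initial data behave. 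The only point you gloss over is the degenerate case $\beta=1$ (i.e.\ $s=0$), where the standard expression for the second solution breaks down and the paper argues by a separate limit; your boundedness argument still applies there, so this is a presentational rather than a mathematical gap.
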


\begin{remark}
    Let us consider one direct application of the above theorem. Consider 
    \begin{align*}
        \lim_{s\to 0}L(-s^{-1};-s z)&=\lim_{s\to 0}\sum_{\ell=0}^{\infty}z^{\ell}(-s)^{\ell}G_{\ell}(-s^{-1})=\sum_{\ell=0}^{\infty}z^{\ell}p_{\ell}^{(\ell)}.
    \end{align*}
    On the other hand by Theorem \ref{tm:GenrationFunction} for any $z\in\CC$ with $|z|<{1\over 2}$ we get
    \begin{align*}
    \lim_{s\to 0}L(-s^{-1};-s z)&=\lim_{s\to 0}F\Big({1\over 2}+{1\over 2}\sqrt{1-{8\over s}}, {1\over 2}-{1\over 2}\sqrt{1-{8\over s}};2;{sz\over 1+sz}\Big).
    \end{align*}
    Further, since $({1\over 2}+{1\over 2}\sqrt{1-{8\over s}})({1\over 2}-{1\over 2}\sqrt{1-{8\over s}})={2\over s}$, ${sz\over 1+sz}=sz(1+o_s(1))$ and $F(a,b;c;z)$ is continuous in $a,b,z$ by \cite[Equation 10.39.10]{NIST}  with $x=2\sqrt{2z}$ we get
    \[
    \lim_{a,b\to\infty}F\Big(a,b;2;{x^2\over 4ab}\Big)=\Big({x\over 2}\Big)^{-1}I_1(x),
    \]
    where
\[
I_{\nu}(z)=\sum_{k=0}^{\infty}{(z/2)^{2k+\nu}\over k!\Gamma(k+1+\nu)},\qquad z\in\CC\setminus (-\infty,0].
\]
is the modified Bessel function. Thus, we conclude that
    \[
    \lim_{s\to 0}L(-s^{-1};-s z)={I_1(2\sqrt{2z})\over \sqrt{2z}}=\sum_{\ell=0}^{\infty}{(2z)^\ell\over \ell!(\ell+1)!}=\sum_{\ell=0}^{\infty}z^{\ell}p_{\ell}^{(\ell)},
    \]
    which implies the known formula $p_{\ell}^{(\ell)}={2^{\ell}\over \ell!(\ell+1)!}$.
\end{remark}

In order to prove Theorem \ref{tm:GenrationFunction} we start by showing a few auxiliary statements, which are of independent interest.

\begin{lemma}\label{lm:QtildeLemma}
Let 
\[
\widetilde Q(x,y):=\sum_{k=0}^\infty \sum_{n=0}^\infty (n+k)(n+k+1)q_{n,k} x^n y^k,
\]
then for ant $x,y\in\RR$ with $|x|+|y|<1$ the above series is absolutely convergent and the following holds
\begin{equation}\label{eq::widePtssumGl}
\widetilde Q(x,y)=\sum_{\ell=0}^\infty \ell(\ell+1)(x+y)^{\ell} G_{\ell}\Big(\dfrac{x}{x+y}\Big)=\dfrac{2 x\, Q(x,y)}{(1-x-y)^2}.
\end{equation}
\end{lemma}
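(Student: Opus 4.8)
The plan is to prove the three assertions of the lemma in order, in each case passing to the one-parameter generating function of the polynomials $G_\ell$ and then using the precise shape of the recurrence \eqref{eq::recforGnt}.

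\emph{Absolute convergence and the first identity.} Since $T_n\subset T$ forces $0\le q_{n,k}={n+k\choose k}\E V_n^k\le{n+k\choose k}$ (this is already noted in the text), I would bound
\[
\sum_{n,k\ge0}(n+k)(n+k+1)q_{n,k}|x|^n|y|^k\le\sum_{\ell\ge0}\ell(\ell+1)\sum_{k=0}^{\ell}{\ell\choose k}|x|^{\ell-k}|y|^k=\sum_{\ell\ge0}\ell(\ell+1)(|x|+|y|)^\ell,
\]
which is finite for $|x|+|y|<1$. With absolute convergence in hand I would regroup the double series for $\widetilde Q$ by the total degree $n+k=\ell$; on each such block the weight $(n+k)(n+k+1)$ is the constant $\ell(\ell+1)$, so Proposition~\ref{prop:UsefulFormula1} gives at once
\[
\widetilde Q(x,y)=\sum_{\ell\ge0}\ell(\ell+1)(x+y)^\ell G_\ell\Big(\frac{x}{x+y}\Big).
\]

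\emph{The second identity.} Fix $(x,y)$ with $|x|+|y|<1$ and $x+y\ne0$, and set $z:=x+y$, $s:=x/(x+y)$; by the previous display and Proposition~\ref{proposition6} the absolutely convergent expansions $\widetilde Q=\sum_{\ell\ge0}\ell(\ell+1)z^\ell G_\ell(s)$ and $Q=\sum_{\ell\ge0}z^\ell G_\ell(s)$ are then available. The computation is to multiply the series for $\widetilde Q$ by $(1-z)^2=1-2z+z^2$ and read off the coefficient of $z^j$, which is $j(j+1)G_j(s)-2(j-1)jG_{j-1}(s)+(j-2)(j-1)G_{j-2}(s)$ (with the convention $G_m\equiv0$ for $m<0$). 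The substantive observation is that multiplying \eqref{eq::recforGnt} by $n(n+1)$ puts it in the form
\[
n(n+1)G_n(s)-2(n-1)nG_{n-1}(s)+(n-1)(n-2)G_{n-2}(s)=2sG_{n-1}(s),\qquad n\ge2,
\]
i.e. the second difference of $\big(n(n+1)G_n(s)\big)_{n\ge0}$ equals $2sG_{n-1}(s)$; hence the $z^j$-coefficient above collapses to $2sG_{j-1}(s)$ for every $j\ge0$, the small cases $j=0,1$ being checked directly from $G_0=1$, $G_1=s$. Therefore $(1-z)^2\widetilde Q=2sz\sum_{m\ge0}G_m(s)z^m=2sz\,Q$, and since $2sz=2x$ and $1-z=1-x-y$ this is precisely $\widetilde Q=2xQ/(1-x-y)^2$. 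Finally, all of $\widetilde Q$, $Q$, $(1-x-y)^{-2}$ are analytic on $\{|x|+|y|<1\}$, so the identity, valid on the dense open set $\{x+y\ne0\}$, extends to the whole region.

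\emph{Main obstacle.} I expect the one real idea to be conceptual: spotting the ``second-difference'' normal form of \eqref{eq::recforGnt}, which is exactly what makes multiplication by $(1-z)^2$ turn the generating identity into the claimed closed form. The other ingredients — the bound $q_{n,k}\le{n+k\choose k}$, the regrouping by total degree, and the bookkeeping passage between $\sum_\ell\ell(\ell+1)(x+y)^\ell G_\ell\big(x/(x+y)\big)$ and its rearranged form in $z$ and $s$ (legitimate once $x+y\ne0$, with continuity taking care of the excluded line) — are routine.
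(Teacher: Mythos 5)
Your proof is correct and takes essentially the same approach as the paper: the first identity comes from regrouping the double series by total degree and invoking Proposition~\ref{prop:UsefulFormula1}, and the second rests on the recurrence \eqref{eq::recforGnt}, which the paper substitutes into the series to obtain $\widetilde Q=2xQ+2(x+y)\widetilde Q-(x+y)^2\widetilde Q$, while you equivalently multiply by $(1-z)^2$ and compare coefficients of $z^j$. Your explicit continuity extension to the line $x+y=0$ is a harmless addition that the paper leaves implicit.
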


\begin{proof}
Setting $\ell=n+k$ and applying Proposition \ref{prop:UsefulFormula1} we get
\begin{align*}
\widetilde Q(x,y)&=\sum_{\ell=0}^\infty \ell(\ell+1)\sum_{k=0}^{\ell} q_{\ell-k,k} x^{\ell-k}y^k
=\sum_{\ell=1}^\infty \ell(\ell+1)(x+y)^{\ell} G_{\ell}\Big(\dfrac{x}{x+y}\Big),
\end{align*}
which gives the first equality in \eqref{eq::widePtssumGl}. In particular this implies that 
\[
|\widetilde Q(x,y)|\leq \sum_{\ell=1}^{\infty}\ell(\ell+1)\sum_{i=0}^{\ell}p_i^{(\ell)}|x|^i(|x|+|y|)^{\ell-i}\leq \sum_{\ell=1}^{\infty}\ell(\ell+1)(|x|+|y|)^{\ell}<\infty
\]
for $|x|+|y|<1$. Further since $G_1(y)=y$ we obtain
\begin{align*}
\widetilde Q(x,y)&=2x+\sum_{\ell=2}^\infty \ell(\ell+1)(x+y)^{\ell} G_{\ell}\left(\dfrac{y}{x+y}\right),
\end{align*}
and using recurrence relation \eqref{eq::recforGnt} for $G_{\ell}(y)$ we get
\begin{align*}
\widetilde Q(x,y)
&=2x+\sum_{\ell=2}^\infty \ell(\ell+1)(x+y)^{\ell} \left(\dfrac{2x}{\ell(\ell+1)(x+y)}+\dfrac{2(\ell-1)}{(\ell+1)}\right)G_{\ell-1}\Big(\dfrac{x}{x+y}\Big)\\
&\hspace{5cm}-\sum_{\ell=2}^\infty \ell(\ell+1)(x+y)^{\ell} \dfrac{(\ell-1)(\ell-2)}{\ell(\ell+1)}G_{\ell-2}\Big(\dfrac{x}{x+y}\Big)\\
&=2x +2 x\sum_{\ell=2}^\infty (x+y)^{\ell-1}G_{\ell-1}\Big(\dfrac{x}{x+y}\Big)+2\sum_{\ell=2}^\infty \ell(\ell-1)(x+y)^{\ell} G_{\ell-1}\Big(\dfrac{x}{x+y}\Big)\\
&\hspace{5cm}-\sum_{\ell=2}^\infty {(\ell-1)(\ell-2)}(x+y)^{\ell} G_{\ell-2}\Big(\dfrac{x}{x+y}\Big)\\
&=2x Q(x,y)+2(x+y)\widetilde Q(x,y)-(x+y)^2 \widetilde Q(x,y),
\end{align*}
where the last line is true due to Proposition \ref{proposition6}.

Therefore, we conclude that
\begin{align*}
\widetilde Q(x,y)\left[1-2(x+y)+(x+y)^2\right]=\widetilde Q(x,y)(x+y-1)^2=2 x Q(x,y),
\end{align*}
and the second equality in \eqref{eq::widePtssumGl} follows.
\end{proof}

Next we show that the function $Q(x,y)$ satisfies a certain partial differential equation of parabolic type with given initial conditions.

\begin{proposition}
The function $Q(x,y)$ satisfy the following parabolic type partial differential equation
\begin{equation}\label{eq::diffPts}
x^2\dfrac{\partial^2 Q}{\partial x^2}+y^2\dfrac{\partial^2 Q}{\partial y^2}+2xy\dfrac{\partial^2 Q}{\partial x\partial y}+2x \dfrac{\partial Q}{\partial x}+2y \dfrac{\partial Q}{\partial y}-\dfrac{2x}{(1-x-y)^2}Q=0, \qquad x,y\in\RR,\, |x|+|y|<1,
\end{equation}
with initial conditions 
\begin{align*}
&Q(0,y)=1,\qquad &\text{ and }&\qquad Q(x,0)=\dfrac{1}{1-x},\\ 
&{\partial \over \partial x} Q(0,y)={2\over y^2}\log\Big({e^{-y}\over 1-y}\Big),\quad {\partial \over \partial y} Q(0,y)=0,\qquad&\text{ and }&\qquad{\partial \over \partial x} Q(x,0)={1\over (1-x)^2}.
\end{align*}
\end{proposition}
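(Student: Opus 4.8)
The plan is to deduce the PDE almost immediately from Lemma \ref{lm:QtildeLemma}, once one recognizes the differential operator on the left-hand side of \eqref{eq::diffPts} as $D(D+1)$, where $D:=x\partial_x+y\partial_y$ is the Euler operator. First I would record the elementary operator identity
\[
x^2\partial_{xx}+y^2\partial_{yy}+2xy\partial_{xy}+2x\partial_x+2y\partial_y=D^2+D=D(D+1),
\]
which is a one-line direct computation (using $\partial_x(yf_y)=yf_{xy}$, etc.). Since the double series \eqref{eq:DefF} converges absolutely on the open region $|x|+|y|<1$, it defines a real-analytic function there and may be differentiated term by term; because $D(D+1)(x^ny^k)=(n+k)(n+k+1)x^ny^k$, the left-hand side of \eqref{eq::diffPts} equals $\sum_{n,k\ge 0}(n+k)(n+k+1)q_{n,k}x^ny^k=\widetilde Q(x,y)$. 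By Lemma \ref{lm:QtildeLemma} we have $\widetilde Q(x,y)=\dfrac{2x\,Q(x,y)}{(1-x-y)^2}$ on $|x|+|y|<1$, and subtracting this last term gives exactly \eqref{eq::diffPts}.

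For the initial conditions I would simply substitute the known boundary values of the coefficient array $(q_{n,k})$. Putting $x=0$ kills every term with $n\ge1$, so $Q(0,y)=\sum_{k\ge0}q_{0,k}y^k=q_{0,0}=1$ and $\partial_yQ(0,y)=\sum_{k\ge1}kq_{0,k}y^{k-1}=0$, using $q_{0,0}=1$ and $q_{0,k}=0$ for $k\ge1$. Putting $y=0$ leaves only the terms with $k=0$, and $q_{n,0}=1$ gives $Q(x,0)=\sum_{n\ge0}x^n=(1-x)^{-1}$, whence $\partial_xQ(x,0)=(1-x)^{-2}$. Finally $\partial_xQ(0,y)$ is the coefficient of $x^1$ in $Q$, namely $\sum_{k\ge0}q_{1,k}y^k$; inserting $q_{1,k}=\tfrac{2}{k+2}$ from the previous theorem and summing,
\[
\sum_{k\ge0}\frac{2}{k+2}y^k=\frac{2}{y^2}\sum_{m\ge2}\frac{y^m}{m}=\frac{2}{y^2}\bigl(-\log(1-y)-y\bigr)=\frac{2}{y^2}\log\frac{e^{-y}}{1-y},
\]
which is the stated formula (and is consistent at $y=0$, where both sides equal $q_{1,0}=1$).

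There is essentially no obstacle once Lemma \ref{lm:QtildeLemma} is available — the genuine content was already carried by that lemma. The only points deserving a word of care are: (i) justifying term-by-term differentiation of $Q$, which follows from the absolute convergence of \eqref{eq:DefF} on $\{|x|+|y|<1\}$ and the resulting analyticity; (ii) the operator identity $D^2+D=x^2\partial_{xx}+y^2\partial_{yy}+2xy\partial_{xy}+2x\partial_x+2y\partial_y$, a short explicit calculation; and (iii) the evaluation of $\partial_xQ(0,y)$, which quietly uses the closed form $q_{1,k}=2/(k+2)$ from the earlier theorem together with the elementary series $\sum_{m\ge1}y^m/m=-\log(1-y)$.
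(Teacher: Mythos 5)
Your proof is correct, and for the PDE itself it takes a genuinely cleaner route than the paper, while resting on the same key input, Lemma \ref{lm:QtildeLemma}. The paper does not use the Euler-operator identity: it introduces the auxiliary function $F(x,y)=(x+y)Q(x,y)=\sum_{\ell\ge 0}(x+y)^{\ell+1}G_\ell\big(\tfrac{x}{x+y}\big)$, justifies term-by-term differentiation of that $\ell$-series by explicit domination estimates on $\partial f_\ell$, computes all first and second partials of $F$ in terms of $G_\ell, G'_\ell, G''_\ell$, checks that $\tfrac{x^2}{x+y}F_{xx}+\tfrac{y^2}{x+y}F_{yy}+\tfrac{2xy}{x+y}F_{xy}=\widetilde Q=\tfrac{2xQ}{(1-x-y)^2}$, and then translates derivatives of $F$ back into derivatives of $Q$. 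Your observation that the left-hand operator is $D(D+1)$ with $D=x\partial_x+y\partial_y$, together with $D(D+1)x^ny^k=(n+k)(n+k+1)x^ny^k$, collapses all of this into one line once term-by-term differentiation of the double power series $\sum q_{n,k}x^ny^k$ on $\{|x|+|y|<1\}$ is granted (which is standard for an absolutely convergent power series on that region, and could be made explicit by the same kind of domination the paper uses). For the initial conditions you also deviate slightly: the paper evaluates $\partial_xQ(0,y)$ through the $G_\ell$ representation using $G'_\ell(0)=p_1^{(\ell)}=\tfrac{2}{\ell+1}$, whereas you read off the coefficient series $\sum_k q_{1,k}y^k$ and insert $q_{1,k}=\tfrac{2}{k+2}$. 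That value is only asserted (not verified) in the statement of the recurrence theorem you cite, so strictly you should note that it follows either by the one-line direct computation $V_1=1-X_1^{(1)}-X_1^{(2)}$, or from Proposition \ref{lm:RecurrenceAP} via $q_{1,k}=p_1^{(k+1)}=\tfrac{2}{k+2}$; neither route uses the present proposition, so there is no circularity. With that small citation repaired, your argument is complete and noticeably shorter than the paper's.
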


\begin{proof}
Let us denote 
\[
F(x,y)=(x+y)Q(x,y)=\sum_{\ell=0}^{\infty}(x+y)^{\ell+1}G_{\ell}\Big(\dfrac{x}{x+y}\Big)
\]
as follows by Proposition \ref{prop:UsefulFormula1}. We would like to compute the partial derivatives of $F$ with respect to $x$ and $y$ up to order $2$. For this we first need to ensure that we may exchange the summation and differentiation. Recall that for any $\ell\ge 0$ we have that 
\[
f_{\ell}(x,y):=(x+y)^{\ell+1}G_{\ell}\Big({x\over x+y}\Big)=\sum_{i=0}^{\ell}p_{i}^{(\ell)}x^i(x+y)^{\ell-i+1},
\]
is a polynome in $x$ and $y$ and, hence, infinitely differentiable. Moreover, for any $\varepsilon >0$ and any $x,y\in\RR$ with $|x|+|y|\leq 1-\varepsilon$ we have that 
\[
\Big|{\partial f_{\ell}(x,y)\over \partial y}\Big|\leq \sum_{i=0}^{\ell}(\ell-i+1)p_i^{(\ell)}|x|^i(|x|+|y|)^{\ell-i}\leq (\ell+1)(|x|+|y|)^{\ell}\sum_{i=0}^{\ell}p_i^{(\ell)}\leq (\ell+1)(1-\varepsilon)^{\ell},
\]
since $\sum_{i=0}^{\ell}p_i^{(\ell)}=1$. Similarly we get
\begin{align*}
&\Big|{\partial f_{\ell}(x,y)\over \partial x}\Big|\leq (\ell+1)(1-\varepsilon)^{\ell}, \qquad &\Big|{\partial^2 f_{\ell}(x,y)\over \partial x^2}\Big|\leq 4(\ell+1)\ell(1-\varepsilon)^{\ell-1},\\
&\Big|{\partial^2 f_{\ell}(x,y)\over \partial y^2}\Big|\leq (\ell+1)\ell(1-\varepsilon)^{\ell-1},\qquad &\Big|{\partial^2 f_{\ell}(x,y)\over \partial x\,\partial y}\Big|\leq 2(\ell+1)\ell(1-\varepsilon)^{\ell-1}.
\end{align*}
Since the series $\sum_{\ell=0}^{\infty}(\ell+1)(1-\varepsilon)^{\ell}$ and $\sum_{\ell=0}^{\infty}(\ell+1)\ell(1-\varepsilon)^{\ell-1}$ converge for any $\varepsilon>0$ by \cite[Theorem 6.28]{Klenke} we may exchange differentiation with respect to $x$ and $y$ and summation with respect to $\ell$ for $|x|+|y|<1$. Thus, we get
\begin{align*}
\dfrac{\partial F}{\partial y}&=\sum_{\ell=0}^{\infty}\Big[(\ell+1)(x+y)^{\ell}G_{\ell}\Big(\dfrac{x}{x+y}\Big)-(x+y)^{\ell-1}G'_{\ell}\Big(\dfrac{x}{x+y}\Big)x\Big],\\
\dfrac{\partial F}{\partial x}&=\sum_{\ell=0}^{\infty}\Big[(\ell+1)(x+y)^{\ell}G_{\ell}\Big(\dfrac{x}{x+y}\Big)+(x+y)^{\ell-1}G'_{\ell}\Big(\dfrac{x}{x+y}\Big)y\Big].
\end{align*}
and for the second derivatives we obtain
\begin{align*}
\dfrac{\partial^2 F}{\partial x\,\partial y}&=\sum_{\ell=0}^{\infty}\Big[(\ell+1)\ell(x+y)^{\ell-1}G_{\ell}\Big(\dfrac{x}{x+y}\Big)+(y-x)\ell(x+y)^{\ell-2}G'_{\ell}\Big(\dfrac{x}{x+y}\Big)-xy(x+y)^{\ell-3}G''_{\ell}\Big(\dfrac{x}{x+y}\Big)\Big],\\
\dfrac{\partial^2 F}{\partial y^2}&=\sum_{\ell=0}^{\infty}\Big[(\ell+1)\ell(x+y)^{\ell-1}G_{\ell}\Big(\dfrac{x}{x+y}\Big)-2x\ell(x+y)^{\ell-2}G'_\ell\Big(\dfrac{x}{x+y}\Big)+x^2(x+y)^{\ell-3}G''_{\ell}\Big(\dfrac{x}{x+y}\Big)\Big],\\
\dfrac{\partial^2 F}{\partial x^2}&=\sum_{\ell=0}^{\infty}\Big[(\ell+1)\ell(x+y)^{\ell-1}G_{\ell}\Big(\dfrac{x}{x+y}\Big)+2y\ell(x+y)^{\ell-2}G'_{\ell}\Big(\dfrac{x}{x+y}\Big)+y^2(x+y)^{\ell-3}G''_{\ell}\Big(\dfrac{x}{x+y}\Big)\Big].
\end{align*}
Now one can directly check, that due to \eqref{eq::widePtssumGl} the following holds
\begin{equation*}\label{eq:16.07.24_3}
\dfrac{x^2}{x+y}\dfrac{\partial^2 F}{\partial x^2}+\dfrac{y^2}{x+y}\dfrac{\partial^2 F}{\partial y^2}+\dfrac{2xy}{x+y}\dfrac{\partial^2 F}{\partial x\partial y}=\widetilde Q=\dfrac{2x\, Q}{(1-x-y)^2}.
\end{equation*}

Finally, since
\[
\dfrac{\partial^2 F}{\partial x^2}=(x+y)\dfrac{\partial^2 Q}{\partial x^2}+2\dfrac{\partial Q}{\partial x},\quad \dfrac{\partial^2 F}{\partial y^2}=(x+y)\dfrac{\partial^2 Q}{\partial y^2}+2\dfrac{\partial Q}{\partial y},\quad \dfrac{\partial^2 F}{\partial x\,\partial y}=(x+y)\dfrac{\partial^2 Q}{\partial x\,\partial y}+\dfrac{\partial Q}{\partial x}+\dfrac{\partial Q}{\partial y},
\]
the proof follows.

For the initial conditions we note that $q_{n,0}=1$ for $n\ge 0$ and $q_{0,k}=0$ for $k\ge 1$. Thus,
\[
Q(0,y)=\sum_{k=0}^{\infty}q_{0,k}y^k=q_{0,0}=1,\qquad Q(x,0)=\sum_{n=0}^{\infty}q_{n,0}x^n=\sum_{n=0}^{\infty}x^n={1\over 1-x}, \quad |x|<1.
\]
Moreover, according to Proposition \ref{prop:UsefulFormula1} and using the same arguments as above we get
\begin{align*}
\dfrac{\partial Q}{\partial y}&=\sum_{\ell=0}^{\infty}\Big[\ell(x+y)^{\ell-1}G_{\ell}\Big(\dfrac{x}{x+y}\Big)-(x+y)^{\ell-2}G'_{\ell}\Big(\dfrac{x}{x+y}\Big)x\Big],\\
\dfrac{\partial Q}{\partial x}&=\sum_{\ell=0}^{\infty}\Big[\ell(x+y)^{\ell-1}G_{\ell}\Big(\dfrac{x}{x+y}\Big)+(x+y)^{\ell-2}G'_{\ell}\left(\dfrac{x}
{x+y}\right)y\Big].
\end{align*}
Thus, using the fact that $G_{\ell}(1)=1$ for any $\ell\ge 0$, $G_{\ell}(0)=0$ if $\ell\ge 1$, and that $G_{\ell}'(0)=p_1^{(\ell)}={2\over \ell+1}$ as follows by \eqref{eq:Probabilities}, we get
\begin{align*}
    {\partial \over \partial x} Q(x,0)&=\sum_{\ell=1}^{\infty}\ell x^{\ell-1}G_{\ell}(1)={1\over (1-x)^2},\qquad |x|<1,\\
    {\partial \over \partial x} Q(0,y)&=\sum_{\ell=1}^{\infty}\Big[\ell y^{\ell-1}G_{\ell}(0)+y^{\ell-1}G_{\ell}'(0)\Big]={2\over y^2}\sum_{\ell=2}^{\infty}{y^{\ell}\over \ell}={2\over y^2}\log\Big({e^{-y}\over 1-y}\Big),\qquad |y|<1,\\
    {\partial \over \partial y} Q(0,y)&=\sum_{\ell=1}^{\infty}\ell y^{\ell-1}G_{\ell}(0)=0,\qquad |y|<1.
\end{align*}
\end{proof}

We are now in the position to proof Theorem \ref{tm:GenrationFunction}.

\begin{proof}[Proof of Theorem \ref{tm:GenrationFunction}]
First we note that for any $s\in\RR$ we have
\[
|G_{\ell}(s)|\leq \sum_{k=1}^{\ell}|s|^kp_{k}^{(\ell)}\leq \max (1,|s|)^{\ell}\leq (|s|+|1-s|)^{\ell},
\]
and, hence,
\[
\sum_{\ell=0}^{\infty}|z|^{\ell}|G_{\ell}(s)|\leq \sum_{\ell=0}^{\infty}((|s|+|1-s|)z)^{\ell}<\infty,
\]
for $|z|<(|s|+|1-s|)^{-1}$ and the series converges absolutely.

Further we note that $L(s;z)=Q(sz,(1-s)z)$. Thus, we introduce change of variables $z=x+y$, $s=x/(x+y)$ and with this change of variables equation \eqref{eq::diffPts} becomes a second-order ordinary linear differential equation for the function $f(z):=Q(sz, (1-s)z)=L(s;z)$, $s\in\RR$ namely
\[
zf_{zz}''+2 f_{z}'-\dfrac{2s}{(z-1)^2}f=0,\qquad |z|<{1\over |s|+|1-s|},
\]
with initial conditions $f(0)=1$ and $f'_z(0)=s$. Further let $\beta={1\over 2}+{1\over 2}\sqrt{1+8s}$. Then, by setting $w(z):=(1-z)^{-\beta}f(z)$ we conclude that $w$ satisfies the following differential equation 
$$
z(z-1)w''_{zz}+(z(2\beta+2)-2)w_z'+\beta(\beta+1)w=0,\qquad |z|<{1\over |s|+|1-s|}\leq 1,
$$
with initial conditions $w(0)=1$, $w'_z(0)={\beta(\beta+1)\over 2}$. The above differential equation is Gauss hypergeometric equation, whose particular solution is given by Gauss hypergeometric function
\[
w_0(z)=F(\beta,\beta+1;2;z),
\]
defined by \eqref{eq:HypergeometricFuncDef}. If $\beta\neq 1$ then the fundamental solution at $z=0$ has the form \cite[Section 15.10]{NIST}
\begin{align*}
w(z)&=c_1 F(\beta,\beta+1;2;z)+ c_2 \Big(F(\beta,\beta+1;2;z)\log(z)\\
&\quad+{1\over \beta(\beta-1)}{1\over z}+\sum_{k=0}^{\infty}{(\beta)_k(\beta+1)_k\over (2)_kk!}z^k(\psi(\beta+k)+\psi(\beta+1+k)-\psi(1+k)-\psi(2+k))\Big),
\end{align*}
where $\psi(z)=\Gamma'(z)/\Gamma(z)$, $z\neq 0,-1,-2,\ldots$ and $c_1,c_2\in \RR$. Note that according to initial conditions and since $F(\beta,\beta+1;2;0)=1$ we have
\[
w(0)=c_1 + c_2(\psi(\beta)+\psi(\beta+1)-\psi(1)-\psi(2)) +c_2\lim_{z\to 0}\Big(\log (z)+{1\over \beta(\beta-1)}{1\over z}\Big)=1.
\]
Since $\log (z)+{1\over \beta(\beta-1)}{1\over z}$ is not bounded at $z=0$ we conclude that $c_2=0$ and $c_1=1$ and, thus, $w(z)=F(\beta,\beta+1;2;z)$ is the only solution in this case. If $\beta=1$ the fundamental solution at $z=0$ is of the form \cite[Section 15.10]{NIST}
\[
w(z)=c_1 F(\beta,\beta+1;2;z)+c_2z^{-1}\lim_{c\to 0}\Big(\lim_{a\to 0}F(a,1;c;z)\Big),\qquad c_1,c_2\in\RR.
\]
Further note that according to \eqref{eq:HypergeometricFuncDef} we have
\begin{align*}
    \lim_{c\to 0}\Big(\lim_{a\to 0}F(a,1;c;z)\Big)=1
\end{align*}
for any $z\in\CC$ with $|z|<1$. Hence due to initial condition $w(0)=1$ and since the function $z^{-1}$ is not bounded at $z=0$ we conclude that $c_2=0$ and $c_1=1$. Combining this together leads to
\[
L(s;z)=f(z)=(1-z)^{\beta}w(z)=(1-z)^{\beta}F(\beta,\beta+1;2;z)=F\Big(\beta,1-\beta;2;{z\over z-1}\Big),
\]
where the last equality follows from \eqref{eq:HypergeometricFuncEquiv}. This finishes the proof of the first part of the theorem.

\end{proof}

\section{Identities for the random convex chain}

In this section we present a number of results for the random convex chain, including the recurrence relation and some exact formulas for the expected moments $\EE V_n^k$, $n,k\ge 0$ and the exact formulas for the probability generating functions $G_n$, $n\ge 1$. In order to not break the flow of representation the proofs will be given at the end of the section.

\begin{theorem}\label{thm:MomentRecursion}
For any $n\in\NN$ and $k\ge 0$ the following recurrent equality holds
\begin{equation}\label{eq::EVsum}
\E V_n^k=\dfrac{2n}{(n+k)(n+k+1)}\sum_{\ell=0}^k\sum_{m=0}^{n-1}\dfrac{{{n-1}\choose{m}}{{k}\choose{\ell}}}{{{n+k}\choose{m+\ell}}}\E V_{m}^{\ell},
\end{equation}
where we recall that $\EE V_0^k:=0$ for $k\ge 1$ and $\EE V_0^0:=1$.
\end{theorem}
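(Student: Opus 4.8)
The plan is to read off the recurrence \eqref{eq::EVsum} from the functional identity for $Q(x,y)$, namely the second equality in \eqref{eq::widePtssumGl}:
\[
\widetilde Q(x,y)(1-x-y)^2 = 2x\,Q(x,y),
\]
where $\widetilde Q(x,y)=\sum_{n,k\ge 0}(n+k)(n+k+1)q_{n,k}x^ny^k$ and $q_{n,k}={n+k\choose k}\E V_n^k$. The strategy is simply to expand both sides of this identity as formal power series in $x$ and $y$ and compare the coefficient of $x^ny^k$. The left-hand side is $\widetilde Q$ times the polynomial $(1-x-y)^2 = 1-2x-2y+x^2+2xy+y^2$, so its coefficient of $x^ny^k$ is a fixed finite linear combination of the numbers $(m+\ell)(m+\ell+1)q_{m,\ell}$ for $(m,\ell)$ in a small neighborhood of $(n,k)$. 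The right-hand side has coefficient $2q_{n-1,k}$. This already gives a recurrence, but it is not yet the target recurrence; that is essentially the content of the earlier Theorem on the recurrence \eqref{eq::recAnk}, rearranged. To obtain the \emph{explicit} double-sum form \eqref{eq::EVsum} instead, I would take the product the other way.

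First I would rewrite $2x\,Q(x,y)=\widetilde Q(x,y)(1-x-y)^2$ as
\[
Q(x,y)=\widetilde Q(x,y)\cdot\frac{(1-x-y)^2}{2x}
\]
is not a power series, so instead I multiply by the inverse of $(1-x-y)^2$. Recall $(1-x-y)^{-2}=\sum_{j\ge 0}(j+1)(x+y)^j$, hence
\[
\widetilde Q(x,y) = \frac{2x\,Q(x,y)}{(1-x-y)^2} = 2x\,Q(x,y)\sum_{j=0}^\infty (j+1)(x+y)^j.
\]
Now expand the right-hand side: $Q(x,y)=\sum_{m,\ell}q_{m,\ell}x^m y^\ell$, and $2x(x+y)^j=2\sum_{r=0}^{j}{j\choose r}x^{r+1}y^{j-r}$; multiplying $(j+1)$ and summing over $j$, the coefficient of $x^{a}y^{b}$ in $2x\sum_j(j+1)(x+y)^j$ is $2(a+b){a+b-1\choose b}$ for $a\ge 1$ (and $0$ for $a=0$), since the term comes from $j=a+b-1$, $r=a-1$. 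Convolving with $Q$, the coefficient of $x^ny^k$ in $\widetilde Q$ becomes
\[
(n+k)(n+k+1)q_{n,k}=\sum_{m=0}^{n-1}\sum_{\ell=0}^{k}2(n+k-m-\ell)\binom{n+k-m-\ell-1}{k-\ell}\,q_{m,\ell}.
\]
The remaining step is bookkeeping with binomial coefficients: substitute $q_{m,\ell}=\binom{m+\ell}{\ell}\E V_m^\ell$ and $q_{n,k}=\binom{n+k}{k}\E V_n^k$, and check the identity
\[
\frac{2(n+k-m-\ell)\binom{n+k-m-\ell-1}{k-\ell}\binom{m+\ell}{\ell}}{(n+k)(n+k+1)\binom{n+k}{k}}
=\frac{2n}{(n+k)(n+k+1)}\cdot\frac{\binom{n-1}{m}\binom{k}{\ell}}{\binom{n+k}{m+\ell}}.
\]
That is, one must verify $(n+k-m-\ell)\binom{n+k-m-\ell-1}{k-\ell}\binom{m+\ell}{\ell}\binom{n+k}{k}^{-1}=n\binom{n-1}{m}\binom{k}{\ell}\binom{n+k}{m+\ell}^{-1}$, which is a routine Vandermonde/factorial manipulation (write everything in factorials; both sides equal $\frac{n!\,k!\,(n+k-m-\ell)!}{m!\,\ell!\,(k-\ell)!\,(n-1-m)!\,(n+k)!}$ after cancellation). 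Dividing through by $(n+k)(n+k+1)$ and by $\binom{n+k}{k}$ yields exactly \eqref{eq::EVsum}, and the cases $n=0$ (all terms on both sides vanish for $k\ge1$, and $q_{0,0}=1$ is consistent) are checked separately.

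The main obstacle I anticipate is purely the interchange-of-summation/analyticity bookkeeping rather than anything conceptual: one must justify that all the series involved ($Q$, $\widetilde Q$, and the expansion of $(1-x-y)^{-2}$) converge absolutely on $|x|+|y|<1$ so that the rearrangements used to multiply the power series termwise are legitimate — but this is already provided by Lemma \ref{lm:QtildeLemma} and the convergence estimate $q_{n,k}\le\binom{n+k}{k}$ established just before \eqref{eq::Ankk!n!}. The only genuinely computational part is the binomial identity above, and it is a one-line factorial check. An alternative, if one prefers to avoid generating functions entirely, is to start from the recurrence \eqref{eq::recAnk} for $q_{n,k}$ and telescope/iterate it; but going through $\widetilde Q(x,y)(1-x-y)^2=2xQ(x,y)$ is cleaner because the geometric-series expansion of $(1-x-y)^{-2}$ produces the double sum directly.
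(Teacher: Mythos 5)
Your proposal is correct and follows essentially the same route as the paper: both extract the coefficient of $x^ny^k$ from the identity $\widetilde Q(x,y)=2x\,Q(x,y)/(1-x-y)^2$ of Lemma \ref{lm:QtildeLemma} and arrive at the same intermediate relation $(n+k)(n+k+1)q_{n,k}=2\sum_{\ell=0}^{k}\sum_{m=0}^{n-1}\frac{(n+k-m-\ell)!}{(k-\ell)!\,(n-1-m)!}\,q_{m,\ell}$ before the final binomial rearrangement. The only difference is mechanical — the paper reads off coefficients via mixed partial derivatives and the Leibniz rule, while you expand $(1-x-y)^{-2}=\sum_j(j+1)(x+y)^j$ and take a Cauchy product — and your factorial check is right (up to the harmless omission of the common factor $(m+\ell)!$ in the displayed common value).
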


Recurrence relation from Theorem \ref{thm:MomentRecursion} provides a way to compute all values of $(\EE V_{n}^k)_{k,n\ge 0}$, but the formulas become very involved. Moreover in order to compute $\EE V_n^k$ for given $k$ and all $n$ one would need to know the values $\EE V_m^k$ for all $m<n$, which makes the recurrence with respect to the number of points $n$ more complicated to apply. The recurrence with respect to degree $k$ is easier since $\EE V_n^k$ can be computed as soon as the values $\EE V_m^{\ell}$ for $m<n$ and $\ell<k$ are given. Below we present a few simple exact formulas for small values of  $k$.

\begin{corollary}\label{cor:ExpectedVolume1}
    For any $k\ge 0$ we get
    \begin{align*}
        \EE V_1^k&={2\over (k+1)(k+2)},\\
        \EE V_2^k&={4(k-3)\over(k+1)(k+2)(k+3)}+{8\over (k+1)(k+2)^2}H_{k+2},
    \end{align*}
    where $H_{k+2}$ is harmonic number defined by \eqref{eq:HarmonicNumbers}.
\end{corollary}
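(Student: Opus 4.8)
The plan is to feed the recurrence \eqref{eq::EVsum} of Theorem~\ref{thm:MomentRecursion} first at $n=1$ and then at $n=2$, using the $n=1$ output as the input for $n=2$, and to simplify the resulting finite sums.

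\emph{The case $n=1$.} Here the inner sum over $m$ collapses to $m=0$ only, and since $\EE V_0^\ell=0$ for $\ell\ge 1$ while $\EE V_0^0=1$, only the term $\ell=0$ survives. As $\binom{0}{0}=\binom{k}{0}=\binom{k+1}{0}=1$, the recurrence reads $\EE V_1^k=\frac{2}{(k+1)(k+2)}$, which is the first assertion.

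\emph{The case $n=2$.} Now $m\in\{0,1\}$. The contribution of $m=0$ again survives only at $\ell=0$ and equals $1$, while the contribution of $m=1$ carries the factor $\EE V_1^\ell=\frac{2}{(\ell+1)(\ell+2)}$ supplied by the first part. The step I would single out as the heart of the computation is the identity
\[
\frac{\binom{k}{\ell}}{\binom{k+2}{\ell+1}}=\frac{(\ell+1)(k+1-\ell)}{(k+1)(k+2)},
\]
a one-line manipulation of factorials that is exactly what makes the double sum tractable: the factor $\ell+1$ cancels the $\ell+1$ in the denominator of $\EE V_1^\ell$, so the whole expression collapses to
\[
\EE V_2^k=\frac{4}{(k+2)(k+3)}\left(1+\frac{2}{(k+1)(k+2)}\sum_{\ell=0}^{k}\frac{k+1-\ell}{\ell+2}\right).
\]

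It then remains to evaluate the elementary sum, which I would do by the index shift $j=\ell+2$: writing $k+1-\ell=(k+3)-j$ gives $\sum_{\ell=0}^{k}\frac{k+1-\ell}{\ell+2}=(k+3)(H_{k+2}-1)-(k+1)=(k+3)H_{k+2}-2(k+2)$. Substituting this back and combining the rational parts via $1-\frac{4}{k+1}=\frac{k-3}{k+1}$ yields
\[
\EE V_2^k=\frac{4(k-3)}{(k+1)(k+2)(k+3)}+\frac{8}{(k+1)(k+2)^2}H_{k+2},
\]
as claimed. I do not anticipate a genuine obstacle here: once Theorem~\ref{thm:MomentRecursion} and the $n=1$ value are in hand, everything reduces to a finite elementary computation, and the only places demanding care are the binomial-ratio cancellation and the index shift in the harmonic sum.
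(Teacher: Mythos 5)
Your proposal is correct and follows essentially the same route as the paper: specialize the recurrence \eqref{eq::EVsum} to $n=1$ (only the $m=\ell=0$ term survives), then to $n=2$ using $\EE V_1^\ell=\frac{2}{(\ell+1)(\ell+2)}$, simplify the binomial ratio, and evaluate the resulting sum as a harmonic number. Your write-up just makes the binomial-ratio cancellation and the index shift more explicit than the paper does.
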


\begin{remark}
    We note, that by direct computations it is easy to ensure that
    \[
    \EE V_1^k={2\over (k+1)(k+2)}={1\over 2^{k}}\EE|Y_1-Y_2|^k,
    \]
    where $Y_1,Y_2$ are independent random variables uniformly distributed in $[0,1]$. This relation has a simple geometric meaning. Namely let $\xi$ be a height of the random triangle $T_1$, then it is easy to ensure by direct computations that $\sqrt{2}\xi$ has the same distribution as $|Y_1-Y_2|$.
\end{remark}

Let $a_j={2\over j(j+1)}$, $j\in\mathbb{N}$. In what follows it will be convenient to define 
\begin{equation}\label{eq:ConstantsC}
c(k,\ell):=e_k(a_1,\ldots,a_{\ell})=\sum_{1\leq j_1<\ldots<j_k\leq \ell}\prod_{i=1}^k{2\over j_i(j_i+1)},\qquad k,\ell\ge 0,
\end{equation}
where $e_k$, $k\ge 0$ is an elementary symmetric polynomial of degree $k$. We also note that $c(0,\ell)=e_0(a_1,\ldots,a_{\ell})=1$ for any $\ell\ge 0$ and $c(k,\ell)=0$ for $k>\ell$.

\begin{theorem}\label{tm:FunctionG}
    For any $n\in\NN$ and $t\in\RR$ we have
     \begin{align*}
    G_{n}(t)&=1+\sum_{\ell=1}^{n}(-1)^{\ell}{n\choose \ell}\prod_{i=1}^{\ell}\Big(1-{2t\over i(i+1)}\Big)=\sum_{k=1}^{n}t^k\sum_{\ell=k}^{n}(-1)^{\ell+k}{n\choose\ell}c(k,\ell).
    \end{align*}
\end{theorem}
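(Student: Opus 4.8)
The plan is to start from Theorem \ref{tm:GenrationFunction}, in its second form
\[
L(s;z)=\sum_{\ell=0}^{\infty}z^{\ell}G_{\ell}(s)=F\Big(\beta,1-\beta;2;\tfrac{z}{z-1}\Big),\qquad \beta=\tfrac12+\tfrac12\sqrt{1+8s},
\]
valid for real $s$ with $8s+1\ge 0$ and $|z|$ small, and to extract the coefficient of $z^n$ on the right-hand side. The first key observation is the algebraic collapse of the Gauss coefficients. Since $\beta(1-\beta)=\tfrac14-\tfrac14(1+8s)=-2s$, for every $i\ge 0$ we have $(i+\beta)(i+1-\beta)=i(i+1)+\beta(1-\beta)=i(i+1)-2s$, hence $(\beta)_m(1-\beta)_m=\prod_{i=0}^{m-1}\big(i(i+1)-2s\big)$. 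Writing $P_m:=\prod_{i=1}^{m}\big(1-\tfrac{2s}{i(i+1)}\big)$ with $P_0:=1$, and using $\prod_{i=1}^{m-1}i(i+1)=(m-1)!\,m!$ together with $(2)_m=(m+1)!$, a short computation gives for $m\ge 1$
\[
\frac{(\beta)_m(1-\beta)_m}{m!\,(2)_m}=\frac{-2s}{m(m+1)}\,P_{m-1}=P_m-P_{m-1},
\]
while the $m=0$ term equals $1=P_0$. Therefore $F(\beta,1-\beta;2;w)=1+\sum_{m\ge 1}(P_m-P_{m-1})w^m$.

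Next I would do the coefficient extraction. Substituting $w=\tfrac{z}{z-1}=-\tfrac{z}{1-z}$ and expanding $\tfrac{z^m}{(1-z)^m}=\sum_{n\ge m}\binom{n-1}{m-1}z^n$ (valid for $|z|<1$), then interchanging the two sums (justified by the absolute convergence from Theorem \ref{tm:GenrationFunction} for $|z|$ small), one reads off for $n\ge 1$
\[
G_n(s)=\sum_{m=1}^{n}(-1)^m(P_m-P_{m-1})\binom{n-1}{m-1}.
\]
Then a summation by parts: splitting the difference, re-indexing the $P_{m-1}$-sum by $m\mapsto m+1$, and combining the two resulting sums via Pascal's rule $\binom{n-1}{m-1}+\binom{n-1}{m}=\binom{n}{m}$ telescopes this (the surviving boundary terms being $m=0$, contributing $1$, and $m=n$, contributing $(-1)^nP_n$) to
\[
G_n(s)=1+\sum_{\ell=1}^{n}(-1)^{\ell}\binom{n}{\ell}P_{\ell}=1+\sum_{\ell=1}^{n}(-1)^{\ell}\binom{n}{\ell}\prod_{i=1}^{\ell}\Big(1-\tfrac{2s}{i(i+1)}\Big).
\]
Both sides are polynomials in $s$, so this identity, established on an interval, holds for all real (indeed complex) values; renaming $s$ to $t$ gives the first claimed equality.

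For the second equality I would expand $\prod_{i=1}^{\ell}\big(1-\tfrac{2t}{i(i+1)}\big)=\prod_{i=1}^{\ell}(1-a_it)=\sum_{k=0}^{\ell}(-1)^k c(k,\ell)\,t^k$ using the definition \eqref{eq:ConstantsC} of $c(k,\ell)$, substitute into the formula above, interchange the $k$- and $\ell$-summations (using $c(k,\ell)=0$ for $k>\ell$), and absorb the $k=0$ contribution via $\sum_{\ell=1}^{n}(-1)^{\ell}\binom{n}{\ell}=-1$, which cancels the leading $1$; this yields $G_n(t)=\sum_{k=1}^{n}t^k\sum_{\ell=k}^{n}(-1)^{\ell+k}\binom{n}{\ell}c(k,\ell)$, the case $G_0\equiv 1$ being immediate. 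The main obstacle is the algebraic heart of the argument, the collapse $\tfrac{(\beta)_m(1-\beta)_m}{m!(2)_m}=P_m-P_{m-1}$ resting on the fortunate identity $\beta(1-\beta)=-2s$; after that the telescoping and the two summation interchanges are elementary but require careful bookkeeping of boundary terms.
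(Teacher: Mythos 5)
Your proof is correct, and it reaches the result by a somewhat different mechanism than the paper, although both arguments start from the same place, namely extracting the coefficient of $z^n$ in $L(t;z)=F\big(\beta,1-\beta;2;\tfrac{z}{z-1}\big)$ from Theorem \ref{tm:GenrationFunction}. The paper treats the right-hand side as a composition $f(g(z))$ with $g(z)=\tfrac{z}{z-1}$ and differentiates $n$ times via Fa\'a di Bruno's formula, which brings in the incomplete Bell polynomials $B_{n,\ell}(-1!,\dots,-(n-\ell+1)!)=(-1)^{\ell}\binom{n-1}{\ell-1}\tfrac{n!}{\ell!}$; it then carries the product $\prod_{i=0}^{\ell-1}(i(i+1)-2t)$ along, splits off the $i=0$ factor, and finishes with Pascal's rule. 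You instead exploit the same key collapse $(\beta)_m(1-\beta)_m=\prod_{i=0}^{m-1}(i(i+1)-2t)$ (which in the paper appears as the value of $f^{(m)}(0)$), but you go one step further and recognize the $m$-th Gauss coefficient as the difference $P_m-P_{m-1}$; combined with the elementary negative-binomial expansion $\tfrac{z^m}{(1-z)^m}=\sum_{n\ge m}\binom{n-1}{m-1}z^n$ and a summation by parts, this replaces Bell polynomials entirely and makes the boundary bookkeeping transparent. The trade-off is that you need a short separate justification for interchanging the double sum (your appeal to the absolute convergence ``from Theorem \ref{tm:GenrationFunction}'' is not quite the right reference, since that theorem controls $\sum_\ell z^\ell G_\ell(s)$, not the rearranged hypergeometric side; but the needed bound $|P_m-P_{m-1}|\le C(s)m^{-2}$ makes this immediate for $|z|<\tfrac12$), and a remark that restricting to $s\ge-\tfrac18$ and extending by polynomiality is legitimate --- which you supply. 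The derivation of the second equality (expanding $\prod_{i=1}^{\ell}(1-a_it)$ into elementary symmetric polynomials and cancelling the $k=0$ terms against the leading $1$) coincides with the paper's.
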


As a direct corollary of above theorem we obtain a formula for the probabilities $p_{k}^{(n)}$, which is an alternative to \eqref{eq:Probabilities}.

\begin{corollary}\label{cor:Probabilities}
    For any $n\in\mathbb{N}$ and $0\leq k\leq n$ we have
    \[
    p_{k}^{(n)}=\sum_{\ell=k}^{n}(-1)^{\ell+k}{n\choose\ell}c(k,\ell).
    \]
\end{corollary}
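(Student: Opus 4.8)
The plan is to extract the probabilities $p_k^{(n)}$ from Theorem \ref{tm:FunctionG} simply by reading off the coefficient of $t^k$ in the polynomial $G_n(t)$. Recall that $G_n$ is by definition the probability generating function of $N_n$, so $G_n(t)=\sum_{k=0}^n p_k^{(n)}t^k$, whence $p_k^{(n)}=[t^k]G_n(t)$. Theorem \ref{tm:FunctionG} already gives $G_n$ as an explicit polynomial in $t$, and in fact the second expression in that theorem,
\[
G_n(t)=\sum_{k=1}^{n}t^k\sum_{\ell=k}^{n}(-1)^{\ell+k}\binom{n}{\ell}c(k,\ell),
\]
is written precisely as a power series in $t$ with coefficients $\sum_{\ell=k}^n(-1)^{\ell+k}\binom{n}{\ell}c(k,\ell)$. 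So the statement is immediate once one matches coefficients of like powers of $t$.

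The one point that needs a sentence of justification is the case $k=0$ (and $k$ with $1\le k\le n$ is covered directly). For $k=0$ the claimed right-hand side is $\sum_{\ell=0}^n(-1)^\ell\binom{n}{\ell}c(0,\ell)=\sum_{\ell=0}^n(-1)^\ell\binom{n}{\ell}$, which equals $0$ for $n\ge 1$ by the binomial theorem; and indeed $p_0^{(n)}=\PP(N_n=0)=0$ for $n\ge 1$, consistent with the first form of $G_n$ in Theorem \ref{tm:FunctionG}, whose constant term is $1+\sum_{\ell=1}^n(-1)^\ell\binom{n}{\ell}=0$. For $1\le k\le n$ one just takes the coefficient of $t^k$ on the right-hand side of the displayed formula for $G_n$, which is exactly $\sum_{\ell=k}^n(-1)^{\ell+k}\binom{n}{\ell}c(k,\ell)$, and equates it with $[t^k]G_n(t)=p_k^{(n)}$. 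Finally, one may remark that $c(k,\ell)=0$ for $k>\ell$, so the lower limit of the $\ell$-sum could equally be taken as $0$; and $p_k^{(n)}=0$ for $k>n$ is recovered since then the sum over $\ell\in\{k,\dots,n\}$ is empty.

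There is essentially no obstacle here: the corollary is a direct consequence of Theorem \ref{tm:FunctionG} by comparing coefficients, the only care being the trivial verification of the $k=0$ boundary case via $\sum_{\ell=0}^n(-1)^\ell\binom{n}{\ell}=0$. I would therefore keep the proof to two or three lines, citing Theorem \ref{tm:FunctionG} and the definition of $G_n$ as the probability generating function of $N_n$.
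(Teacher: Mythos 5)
Your proof is correct and is exactly the paper's route: the corollary is stated there as a direct consequence of Theorem \ref{tm:FunctionG}, obtained by comparing coefficients of $t^k$ in the probability generating function $G_n(t)=\sum_k p_k^{(n)}t^k$, with the boundary case $k=0$ handled by $\sum_{\ell=0}^n(-1)^\ell\binom{n}{\ell}=0$ just as you note.
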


The next formula for the moments of the volume of the convex chain is now obtained as a combination of Corollary \ref{cor:Probabilities} with Proposition \ref{lm:RecurrenceAP}.

\begin{theorem}\label{tm:MomentFormulaExact}
    For any $n\ge 1$ and $k\ge 0$ we have
    \begin{align}\label{eq:MomentsFormulaExact}
    \EE V_n^k&=\sum_{i=1}^n\sum_{\ell=i}^{n+k}(-1)^{\ell+i}{{n+k-i\choose k}{n+k\choose \ell}\over {n+k\choose k}}c(i,\ell).
    \end{align}
\end{theorem}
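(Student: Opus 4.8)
The plan is to combine the two earlier identities: Proposition~\ref{lm:RecurrenceAP}, which expresses $q_{n,k}=\binom{n+k}{k}\E V_n^k$ as a weighted sum of the probabilities $p_i^{(n+k)}$, and Corollary~\ref{cor:Probabilities}, which gives the alternative closed form $p_i^{(m)}=\sum_{\ell=i}^m(-1)^{\ell+i}\binom{m}{\ell}c(i,\ell)$. First I would substitute $m=n+k$ into Corollary~\ref{cor:Probabilities} and plug the resulting expression for $p_i^{(n+k)}$ into the right-hand side of \eqref{eq::Anksumpi}, obtaining
\[
\binom{n+k}{k}\E V_n^k=\sum_{i=0}^{n}\binom{n+k-i}{k}\sum_{\ell=i}^{n+k}(-1)^{\ell+i}\binom{n+k}{\ell}c(i,\ell).
\]
Since $c(0,\ell)=1$, the $i=0$ term contributes $\sum_{\ell=0}^{n+k}(-1)^{\ell}\binom{n+k}{k}\binom{n+k}{\ell}$; this I expect to vanish (it is $\binom{n+k}{k}$ times $(1-1)^{n+k}=0$ as long as $n+k\ge 1$, which holds since $n\ge1$), so the sum may be restricted to $i\ge 1$, matching the stated range.

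Next I would interchange the order of summation so that $\ell$ runs outermost (valid since everything is a finite sum), and divide both sides by $\binom{n+k}{k}$. This yields precisely
\[
\E V_n^k=\sum_{i=1}^{n}\sum_{\ell=i}^{n+k}(-1)^{\ell+i}\,\frac{\binom{n+k-i}{k}\binom{n+k}{\ell}}{\binom{n+k}{k}}\,c(i,\ell),
\]
which is \eqref{eq:MomentsFormulaExact}. Strictly speaking one should check the summation ranges are consistent: in \eqref{eq::Anksumpi} the index $i$ runs from $0$ to $n$ but $p_i^{(n+k)}=0$ unless $i\le n+k$, and $\binom{n+k-i}{k}=0$ once $i>n$, so no extra terms appear; likewise $c(i,\ell)=0$ for $i>\ell$, so extending or restricting the inner range is harmless.

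The only genuine point requiring care — and the step I would flag as the main obstacle — is justifying that the $i=0$ contribution drops out and that no boundary terms are lost when reorganizing the double sum; this is a routine binomial-identity verification ($\sum_{\ell}(-1)^\ell\binom{n+k}{\ell}=0$) but must be stated since the final formula's lower limit $i=1$ differs from the $i=0$ lower limit in Proposition~\ref{lm:RecurrenceAP}. Everything else is a direct substitution and a change in the order of two finite summations, so the proof is short. As a sanity check one can verify $k=0$: the formula gives $\E V_n^0=\sum_{i=1}^n\sum_{\ell=i}^n(-1)^{\ell+i}\binom{n}{\ell}c(i,\ell)=\sum_{i=1}^n p_i^{(n)}=1$, consistent with $G_n(1)=1$.
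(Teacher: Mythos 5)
Your proposal is correct and coincides with the paper's own proof: both substitute Corollary \ref{cor:Probabilities} (with $n+k$ in place of $n$) into Proposition \ref{lm:RecurrenceAP}, divide by $\binom{n+k}{k}$, and discard the $i=0$ term via $c(0,\ell)=1$ and $\sum_{\ell=0}^{n+k}(-1)^{\ell}\binom{n+k}{\ell}=0$. The extra bookkeeping you flag about summation ranges is harmless and matches the paper's brief justification.
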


Analysing the above expression we obtain the following corollary, providing more precise formulas for $\E V_n$ and $\E V_n^2$ is terms of harmonic numbers.

\begin{corollary}\label{cor:ExpectedVolume2}
For any $n\ge 0$ we have
    \begin{align*}
    \EE V_n&=
    1-{1\over 3(n+1)}-{2H_{n+1}\over 3(n+1)},\\
    \EE V_n^2&=1-\frac{18n^2 + 106n + 144}{27 (n+1)(n+2)(n+3)}
-{(36n+32)H_{n+2}\over 27(n+1)(n+2)}+{4(H^2_{n+2}+H^{(2)}_{n+2})\over 9(n+1)(n+2)},
    \end{align*}
    where $H_n$ and $H_n^{(2)}$ are defined by \eqref{eq:HarmonicNumbers}.
\end{corollary}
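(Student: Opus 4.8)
The plan is to avoid evaluating the double sum in \eqref{eq:MomentsFormulaExact} directly and instead to extract $\E V_n$ and $\E V_n^2$ from Buchta's known expressions for the first two moments of $N_n$ (equation \eqref{eq:ExpN} and the display immediately following it). The bridge is the identity
\[
\binom{n+k}{k}\,\E V_n^k=\E\binom{n+k-N_{n+k}}{k},\qquad n,k\ge 0,
\]
which is exactly \eqref{eq:16.07.24_1}. One may also recover it from \eqref{eq:MomentsFormulaExact} itself: writing that formula as $\binom{n+k}{k}^{-1}\sum_{i=1}^{n}\binom{n+k-i}{k}\big(\sum_{\ell=i}^{n+k}(-1)^{\ell+i}\binom{n+k}{\ell}c(i,\ell)\big)$ and applying Corollary \ref{cor:Probabilities} to the inner sum turns it into $\binom{n+k}{k}^{-1}\sum_{i=0}^{n+k}\binom{n+k-i}{k}p_i^{(n+k)}$ (using $\binom{n+k-i}{k}=0$ for $i>n$ and $p_0^{(m)}=0$ for $m\ge 1$), which equals $\binom{n+k}{k}^{-1}\E\binom{n+k-N_{n+k}}{k}$.

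From here the computation is mechanical. For $k=1$ the right-hand side is $n+1-\E N_{n+1}$, so $\E V_n=1-\E N_{n+1}/(n+1)$, and inserting $\E N_{n+1}=\tfrac23 H_{n+1}+\tfrac13$ from \eqref{eq:ExpN} yields the first claimed formula. For $k=2$ I would expand
\[
\binom{n+2-N_{n+2}}{2}=\tfrac12\big((n+1)(n+2)-(2n+3)N_{n+2}+N_{n+2}^2\big),
\]
so that
\[
\E V_n^2=1-\frac{(2n+3)\,\E N_{n+2}}{(n+1)(n+2)}+\frac{\E N_{n+2}^2}{(n+1)(n+2)},
\]
and then substitute $\E N_{n+2}=\tfrac23 H_{n+2}+\tfrac13$ and $\E N_{n+2}^2=\tfrac49 H_{n+2}^2+\tfrac{22}{27}H_{n+2}+\tfrac49 H_{n+2}^{(2)}-\tfrac{25}{27}+\tfrac{4}{9(n+3)}$, i.e. \eqref{eq:ExpN} and the subsequent display with $n$ replaced by $n+2$. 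Collecting terms, the $H_{n+2}^2$ and $H_{n+2}^{(2)}$ contributions combine into $\tfrac{4}{9(n+1)(n+2)}(H_{n+2}^2+H_{n+2}^{(2)})$, the contributions linear in $H_{n+2}$ into $-\tfrac{(36n+32)H_{n+2}}{27(n+1)(n+2)}$, and the purely rational terms, placed over the common denominator $27(n+1)(n+2)(n+3)$, into $-\tfrac{18n^2+106n+144}{27(n+1)(n+2)(n+3)}$, which is the second formula. The boundary case $n=0$ is covered automatically, both sides vanishing.

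The only genuinely non-automatic step is this last collection over a common denominator, where one must check that the rational numerator is precisely $18n^2+106n+144$ and the coefficient of $H_{n+2}$ is precisely $36n+32$; this is elementary but error-prone and should be carried out with care. If one prefers to keep the argument free of Buchta's second-moment formula, one can instead compute $\E N_m=G_m'(1)$ and $\E N_m^2=G_m''(1)+G_m'(1)$ directly from the closed form for $G_m$ in Theorem \ref{tm:FunctionG}, evaluating the derivatives at $t=1$ with the help of the telescoping product $\prod_{i=2}^{\ell}\tfrac{(i-1)(i+2)}{i(i+1)}=\tfrac{\ell+2}{3\ell}$ and the binomial identities $\sum_{\ell=1}^{n}(-1)^{\ell}\binom{n}{\ell}=-1$ and $\sum_{\ell=1}^{n}(-1)^{\ell}\binom{n}{\ell}\tfrac1\ell=-H_n$; this route is longer but fully self-contained.
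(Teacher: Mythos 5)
Your argument is correct, and I have checked the final collection of terms: with $\E N_{n+2}=\tfrac23 H_{n+2}+\tfrac13$ and $\E N_{n+2}^2=\tfrac49H_{n+2}^2+\tfrac{22}{27}H_{n+2}+\tfrac49H_{n+2}^{(2)}-\tfrac{25}{27}+\tfrac4{9(n+3)}$ the coefficient of $H_{n+2}$ is $\tfrac{-18(2n+3)+22}{27(n+1)(n+2)}=-\tfrac{36n+32}{27(n+1)(n+2)}$ and the rational part is $\tfrac{-9(2n+3)(n+3)-25(n+3)+12}{27(n+1)(n+2)(n+3)}=-\tfrac{18n^2+106n+144}{27(n+1)(n+2)(n+3)}$, as claimed. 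However, your route is genuinely different from the paper's. The paper proves the corollary from its own exact formula \eqref{eq:MomentsFormulaExact}: it introduces the generating polynomial $C_\ell(t)=\prod_{j=1}^\ell(1+a_jt)$ of the elementary symmetric quantities $c(i,\ell)$, evaluates $C_\ell(-1)$, $C_\ell'(-1)$, $C_\ell''(-1)$ via telescoping products, and finishes with the binomial--harmonic identities of \cite{Batir}; this keeps the derivation self-contained relative to Theorem \ref{tm:MomentFormulaExact} and Corollary \ref{cor:Probabilities}, and extends to $k\ge 3$ by computing $C_\ell^{(m)}(-1)$. You instead go through the Efron-type identity \eqref{eq:16.07.24_1} (which the paper does establish, inside the proof of Proposition \ref{lm:RecurrenceAP}) and then import Buchta's formulas for $\E N_m$ and $\E N_m^2$ from \cite{Buchta12}, which the paper only quotes in the introduction. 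This is shorter and less error-prone for $k=1,2$, but it relies on an external second-moment computation and does not by itself give higher moments of $V_n$ unless one also has (or rederives) the corresponding moments of $N_m$ — which is exactly what your fallback suggestion, differentiating the closed form of $G_m$ from Theorem \ref{tm:FunctionG} at $t=1$, would amount to, and that computation is essentially the same telescoping-product and binomial-identity work the paper performs. Your remark that \eqref{eq:16.07.24_1} can be recovered from \eqref{eq:MomentsFormulaExact} plus Corollary \ref{cor:Probabilities} is accurate, though it simply undoes the derivation of Theorem \ref{tm:MomentFormulaExact}; citing \eqref{eq:16.07.24_1} directly is cleaner. The $n=0$ case indeed checks out on both formulas.
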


\begin{remark}
Using the same analysis as in the proof of Corollary \ref{cor:ExpectedVolume2} one could also derive the formulas for $V_n^3$, $V_n^4$, etc. in terms of rational combination of harmonic and generalized harmonic numbers. The expressions become very long though.
\end{remark}

\begin{remark}
Using the fact that $H_{n+k}=\log n+O(1)$ and $H_{n+k}^{(2)}=O(1)$ as $n\to\infty$ for any fixed $k\ge 0$ we get that $\E V_n=1-{2\log n\over 3n}+O(n^{-1})$ and $\E V_n^2=1-{4\log n\over 3 n}+O(n^{-1})$ as $n\to\infty$. We conjecture, that it holds in general that $\E V_n^k=1-{2k\log n\over 3n}+O(n^{-1})$ for any fixed $k\ge 1$ as $n\to\infty$.
\end{remark}

\begin{remark}
Let us consider a random variable 
\[
D_n:=1-V_n=2\vol(T\setminus T_n),
\]
describing the normalized volume of $T$ not covered by $T_n$ (the so-called missed volume). It is easy to see that the moments of $D_n$ and the moments of $V_n$ are related via the equality $\E D_n^k=\sum_{\ell=0}^k{k\choose \ell}(-1)^{\ell}\E V_n^\ell$. In particular from Corollary \ref{cor:ExpectedVolume2} we deduce
\begin{align*}
\E D_n&=1-\E V_n={2H_{n+1}\over 3(n+1)}+{1\over 3(n+1)}={2\log n\over 3n}+O(n^{-1}),\\
\E D_n^2&={40H_{n+2}\over 27(n+1)(n+2)}-\frac{16n + 36}{27 (n+1)(n+2)(n+3)}
+{4(H^2_{n+2}+H^{(2)}_{n+2})\over 9(n+1)(n+2)}={4(\log n)^2\over 9 n^2}+O(n^{-2}).
\end{align*}
\end{remark}

\subsection{Proofs}

\begin{proof}[Proof of Theorem \ref{thm:MomentRecursion}]
We start by noting that since for any $x,y\in\RR$ with $|x|+|y|<1$ the series
\[
\widetilde Q(x,y)=\sum_{k=0}^\infty \sum_{n=0}^\infty (n+k)(n+k+1)q_{n,k} x^n y^k
\]
converges absolutely (see Lemma \ref{lm:QtildeLemma}) we get that
\[
(n+k)(n+k+1)q_{n,k}=\dfrac{1}{n!}\dfrac{1}{k!}\dfrac{\partial^{n+k}}{\partial x^{n}\partial y^{k}}\widetilde Q(x,y)\Big|_{x=y=0}.
\]
Further, by \eqref{eq::widePtssumGl} and using the product rule we obtain
\begin{align*}
(n+k)(n+k+1)q_{n,k}&=
\dfrac{2}{n!k!}\dfrac{\partial^{n+k}}{\partial x^{n}\,\partial y^{k}}\Big[\dfrac{x }{(x+y-1)^2}Q(x,y)\Big]\Big|_{x=y=0}\\
&=
\dfrac{2}{n!k!}\dfrac{\partial^{n}}{\partial x^{n}}\Big[\sum_{\ell=0}^k{k\choose \ell}\Big(\dfrac{\partial ^{\ell}}{\partial y^{\ell}}Q(x,y)\Big|_{y=0}\Big)x(x-1)^{-k+\ell-2}(-1)^{k-\ell}(k-\ell+1)!\Big]\Big|_{x=0}\\
&=
\dfrac{2}{n!k!}\sum_{\ell=0}^{k}{k\choose \ell}\sum_{n_1+n_2+n_3=n}\dfrac{n!}{n_1!n_2!n_3!}\Big(\dfrac{\partial ^{\ell+n_1}}{\partial y^{\ell}\partial x^{n_1}}Q(x,y)\Big|_{x=y=0}\Big)\\
&\hspace{6cm}\times (k-\ell+1+n_2)!\Big({\partial^{n_3} \over \partial x^{n_3}}x\Big|_{x=0}\Big).
\end{align*}
We note that ${\partial^{n_3} \over \partial x^{n_3}}x\big|_{x=0}={\bf 1}\{n_3=1\}$ and, hence, by setting $n_1=m$, $n_2=n-1-m$ and using \eqref{eq::Ankk!n!} we get
\begin{align*}
&(n+k)(n+k+1)q_{n,k}=2\sum_{\ell=0}^{k}\sum_{m=0}^{n-1}\dfrac{(k+n-\ell-m)!}{(k-\ell)!(n-1-m)!}q_{m,\ell}.
\end{align*}
Finally noting that $
q_{n,k}={{n+k}\choose {k} }\E V_n^k
$ we obtain
\begin{align*}
&(n+k)(n+k+1){(n+k)!\over n!k!}\E V_n^k=2\sum_{\ell=0}^{k}\sum_{m=0}^{n-1}\dfrac{(k+n-\ell-m)!}{(k-\ell)!(n-1-m)!}{(m+\ell)!\over m!\ell!}\EE V_m^{\ell},
\end{align*}
and rearranging coefficients we conclude
\begin{align*}
\E V_n^k&=\dfrac{2n}{(n+k)(n+k+1)}\sum_{\ell=0}^{k}\sum_{m=0}^{n-1}{(n-1)!\over m!(n-1-m)!}{k!\over \ell!(k-\ell)!}{(k+n-\ell-m)!(m+\ell)!\over (k+n)!}\EE V_m^{\ell}\\
&=\dfrac{2n}{(n+k)(n+k+1)}\sum_{\ell=0}^{k}\sum_{m=0}^{n-1}\dfrac{{{n-1}\choose{m}}{{k}\choose{\ell}}}{{{n+k}\choose{m+\ell}}}\E V_{m}^{\ell}.
\end{align*}

\end{proof}

\begin{proof}[Proof of Corollary \ref{cor:ExpectedVolume1}]

    The formula for $\EE V_1^k$ follows directly by \eqref{eq::EVsum}, namely
    \[
    \EE V_1^k={2\over (k+1)(k+2)}\sum_{\ell=0}^1\EE V_0^{\ell}={2\over (k+1)(k+2)},
    \]
    since $\EE V_0^{\ell}=0$ for $\ell\ge 1$ and $\EE V_0^{0}=1$. Similarly for $\EE V_2^k$ the application of \eqref{eq::EVsum} leads to
    \begin{align*}
        \EE V_2^k&=\dfrac{4}{(k+2)(k+3)}\Big(1+\sum_{\ell=0}^k\dfrac{{{k}\choose{\ell}}}{{{k+2}\choose{\ell+1}}}\EE V_1^{\ell}\Big)\\
        &=\dfrac{8}{(k+1)(k+2)^2(k+3)}\sum_{\ell=0}^k{k+3-(\ell+2)\over \ell+2}+{4\over (k+2)^2}\\
        &={4(k-3)\over(k+1)(k+2)(k+3)}+{8\over (k+1)(k+2)^2}\sum_{i=1}^{k+2}{1\over i}.
    \end{align*}
\end{proof}

\begin{proof}[Proof of Theorem \ref{tm:FunctionG}]
    By Theorem \ref{tm:GenrationFunction} for any $t\in\RR$ and $z\in\RR$ with $|z|<(|t|+|1-t|)^{-1}$ we have
    \[
    L(t;z)=\sum_{\ell=0}^{\infty}z^{n}G_{n}(t)=F\Big({1\over 2}+{1\over 2}\sqrt{1+8t},{1\over 2}-{1\over 2}\sqrt{1+8t};2;{z\over z-1}\Big),
    \]
    and the series converges absolutely. Given functions $f,g$ recall the Fa\'a di Bruno's formula 
    \begin{equation}\label{eq:FaaDiBruno}
    {{\rm d}^{n}\over {\rm d} z^{n}} f(g(z))=\sum_{\ell=1}^{n}{{\rm d}^\ell\over {\rm d} z^{\ell}} f(g(z))\cdot B_{n,\ell}\Big({{\rm d}\over {\rm d} z} g(z), {{\rm d}^2\over {\rm d} z^{2}}g(z),\ldots, {{\rm d}^{n-\ell+1}\over {\rm d} z^{n-\ell+1}}g(z)\Big),
    \end{equation}
    where 
    \[
    B_{n,\ell}(x_1,x_2,\ldots,x_{n-\ell+1})=n!\sum_{\substack{j_1,\ldots, j_{n-\ell+1}\ge 0,\\
    j_1+\ldots+j_{n-\ell+1}=\ell,\\
    j_1+2j_2+\ldots +(n-\ell+1)j_{n-\ell+1}=n}}\prod_{i=1}^{n-\ell+1}{1\over j_i!}\Big({x_i\over i!}\Big)^{j_i}
    \]
    is an incomplete Bell polynomial. We will apply \eqref{eq:FaaDiBruno} for $f(z)=F\big({1\over 2}+{1\over 2}\sqrt{1+8t},{1\over 2}-{1\over 2}\sqrt{1+8t};2;z\big)$ and $g(z)={z\over z-1}=1+(z-1)^{-1}$. First note that for any $n\ge 1$ we have
    \begin{align*}
    {{\rm d}^n\over {\rm d} z^{n}}g(z)\Big|_{z=0}=-n!,
    \end{align*}
    and by \eqref{eq:HypergeometricFuncDeriv} together with $F(a,b;c;0)=1$ for any $a,b,c\in\CC$, $c\neq 0,-1,\ldots$ we get
    \begin{align*}
        {{\rm d}^n\over {\rm d} z^{n}}f(z)\Big|_{z=0}={({1\over 2}+{1\over 2}\sqrt{1+8t})_n({1\over 2}-{1\over 2}\sqrt{1+8t})_n\over (n+1)!}={1\over (n+1)!}\prod_{i=0}^{n-1}(i(i+1)-2t).
    \end{align*}
     Combining these observations together we obtain
    \begin{align*}
    G_{n}(t)={1\over n!}{{\rm d}^{n}\over {\rm d} z^{n}}L(t;z)\Big|_{z=0}&={1\over n!}\sum_{\ell=1}^{n}{1\over (\ell+1)!}\prod_{i=0}^{\ell-1}(i(i+1)-2t)B_{n,\ell}(-1!,-2!,\ldots, -(n-\ell+1)!)\\
    &=\sum_{\ell=1}^{n}{(-1)^\ell\over \ell!(\ell+1)!}{n-1\choose \ell-1}\prod_{i=0}^{\ell-1}(i(i+1)-2t)\\
    &=t-\sum_{\ell=2}^{n}(-1)^\ell{n-1\choose \ell-1}{2t\over \ell(\ell+1)}\prod_{i=1}^{\ell-1}\Big(1-{2t\over i(i+1)}\Big),
    \end{align*}
    where in the second step we used that $B_{n,\ell}(-x_1,\ldots, -x_{n-\ell+1})=(-1)^{\ell}B_{n,\ell}(x_1,\ldots, x_{n-\ell+1})$ and 
    \[
    B_{n,\ell}(1!,2!,\ldots, (n-\ell+1)!)={n-1\choose \ell-1}{n!\over \ell!}.
    \]
    By performing a few more transformations we obtain
    \begin{align*}
    G_{n}(t)&=t+\sum_{\ell=2}^{n}(-1)^\ell{n-1\choose \ell-1}\prod_{i=1}^{\ell}\Big(1-{2t\over i(i+1)}\Big)+\sum_{\ell=2}^{n}(-1)^{\ell-1}{n-1\choose \ell-1}\prod_{i=1}^{\ell-1}\Big(1-{2t\over i(i+1)}\Big)\\
    &=t+\sum_{\ell=2}^{n}(-1)^\ell{n-1\choose \ell-1}\prod_{i=1}^{\ell}\Big(1-{2t\over i(i+1)}\Big)+\sum_{\ell=1}^{n-1}(-1)^{\ell}{n-1\choose \ell}\prod_{i=1}^{\ell}\Big(1-{2t\over i(i+1)}\Big)\\
    &=\sum_{\ell=1}^{n}(-1)^\ell{n-1\choose \ell-1}\prod_{i=1}^{\ell}\Big(1-{2t\over i(i+1)}\Big)+\sum_{\ell=1}^{n-1}(-1)^{\ell}{n-1\choose \ell}\prod_{i=1}^{\ell}\Big(1-{2t\over i(i+1)}\Big)+1\\
    &=\sum_{\ell=0}^{n}(-1)^\ell{n\choose \ell}\prod_{i=1}^{\ell}\Big(1-{2t\over i(i+1)}\Big),
    \end{align*}
    where we used ${n-1\choose \ell-1}+{n-1\choose \ell}={n\choose \ell}$ and the fact that the empty product equals $1$. This finishes the proof of the first equality.

    The second equality follows by opening the brackets in the first equality, more precisely
    \begin{align*}
        G_{n}(t)&=\sum_{\ell=0}^{n}(-1)^\ell{n\choose \ell}\prod_{i=1}^{\ell}\Big(1-{2t\over i(i+1)}\Big)\\
        &=\sum_{\ell=0}^{n}(-1)^\ell{n\choose \ell}\sum_{k=0}^\ell(-t)^k\prod_{1\leq j_1<\ldots<j_k\leq \ell}{2\over j_i(j_i+1)}\\
        &=\sum_{k=0}^{n}t^k\sum_{\ell=k}^{n}(-1)^{\ell+k}{n\choose \ell}c(k,\ell).
    \end{align*}
\end{proof}

\begin{proof}[Proof of Theorem \ref{tm:MomentFormulaExact}]
    Combining Corollary \ref{cor:Probabilities} with Proposition \ref{lm:RecurrenceAP} we immediately get 
    \begin{align*}
        \EE V_n^k&=\sum_{i=0}^n {{{n+k-i}\choose k}\over {n+k\choose k}} p_i^{(n+k)}=\sum_{i=0}^n\sum_{\ell=i}^{n+k}(-1)^{\ell+i}{{n+k-i\choose k}{n+k\choose \ell}\over {n+k\choose k}}c(i,\ell)=\sum_{i=1}^{n}\sum_{\ell=i}^{n+k}(-1)^{\ell+i}{{n+k-i\choose k}{n+k\choose \ell}\over {n+k\choose k}}c(i,\ell),
    \end{align*}
    since $c(0,\ell)=1$ for any $\ell\ge 0$ and $\sum_{\ell=0}^{n+k}(-1)^{\ell}{n+k\choose \ell}=0$.
\end{proof}

\begin{proof}[Proof of Corollary \ref{cor:ExpectedVolume2}]
Recall, that $c(i,\ell)$ is an elementary symmetric polynomial of degree $i$ in $\ell$ variables $a_1,\ldots,a_{\ell}$. Denote by 
\[
C_{\ell}(t):=\sum_{i=0}^{\ell}t^i c(i,\ell)=\prod_{j=1}^{\ell}(1+a_jt),
\]
the generating function of the sequence $(c(i,\ell))_{i=0}^{\ell}$. We note that for any $m\ge 1$ it holds that
\[
C^{(m)}_{\ell}(t)=(-1)^m\sum_{i=1}^{\ell}(-i)_mt^{i-m}c(i,\ell),
\]
where we recall that $(x)_m=x(x+1)\ldots (x+m-1)$ is rising factorial. In particular it holds
\begin{align}
C_{\ell}(-1)-1&=\sum_{i=1}^{\ell}(-1)^ic(i,\ell)=\prod_{j=1}^{\ell}{(j-1)(j+2)\over j(j+1)}-1=-1,\label{eq:Am}\\
C_{\ell}'(-1)&=\sum_{i=1}^{\ell}(-1)^i(-i)c(i,\ell)=\prod_{j=2}^{\ell}{(j-1)(j+2)\over j(j+1)}={\ell+2\over 3\ell},\label{eq:Aprime}\\
C_{\ell}^{(m)}(-1)&=\sum_{i=1}^{\ell}(-1)^i(-i)_mc(i,\ell).\notag
\end{align}

We start by analysing the formula \eqref{eq:MomentsFormulaExact} in order to get
\begin{align*}
    \EE V_n^k&={n!\over (n+k)!}\sum_{i=1}^n(n+1-i)_k\sum_{\ell=i}^{n+k}(-1)^{\ell+i}{n+k\choose \ell}c(i,\ell)\\
    &={n!\over (n+k)!}\sum_{\ell=1}^{n+k}(-1)^{\ell}{n+k\choose \ell}\sum_{i=1}^{\ell}(n+1-i)_k(-1)^{i}c(i,\ell).
\end{align*}
Note, that $(n+1-i)_k=0$ for $i\ge n+1$. Further using the formula
\[
(n+1-i)_k=\sum_{m=0}^k{k\choose m}(n+1)_{k-m}(-i)_m
\]
we obtain
\begin{align*}
    \EE V_n^k&={n!\over (n+k)!}\sum_{\ell=1}^{n+k}(-1)^{\ell}{n+k\choose \ell}\sum_{i=1}^{\ell}\sum_{m=0}^k{k\choose m}(n+1)_{k-m}(-1)^{i}(-i)_mc(i,\ell)\\
    &={n!\over (n+k)!}\sum_{\ell=1}^{n+k}(-1)^{\ell}{n+k\choose \ell}\sum_{m=0}^k{k\choose m}(n+1)_{k-m}\sum_{i=1}^{\ell}(-1)^{i}(-i)_mc(i,\ell)\\
    &={n!\over (n+k)!}\sum_{\ell=1}^{n+k}(-1)^{\ell}{n+k\choose \ell}\Big((n+1)_k(C_{\ell}(-1)-1)+\sum_{m=1}^k{k\choose m}(n+1)_{k-m}C_{\ell}^{(m)}(-1)\Big)\\
    &=1+{n!\over (n+k)!}\sum_{m=1}^k{k\choose m}(n+1)_{k-m}\sum_{\ell=1}^{n+k}(-1)^{\ell}{n+k\choose \ell}C_{\ell}^{(m)}(-1),
\end{align*}
where in the last step we used \eqref{eq:Am} and the identity $\sum_{\ell=1}^{n+k}(-1)^{\ell}{n+k\choose \ell}=-1$.

Taking $k=1$ and using \eqref{eq:Aprime} we arrive at
\begin{align*}
\EE V_n&=1+{1\over (n+1)}\sum_{\ell=1}^{n+1}(-1)^{\ell}{n+1\choose \ell}\Big({1\over 3}+{2\over 3\ell}\Big).
\end{align*}
Further using identities $\sum_{\ell=1}^{n+1}(-1)^{\ell}{n+1\choose \ell}=-1$ and $\sum_{\ell=1}^{n+1}(-1)^{\ell-1}{1\over \ell}{n+1\choose \ell}=H_{n+1}$ we get
\[
\EE V_n=1-{1\over 3(n+1)}-{2\over 3(n+1)}H_{n+1}.
\]

For $k=2$ we need to analyse $C_{\ell}''(-1)$. We get
\begin{align*}
C_{\ell}^{''}(-1)&=2\sum_{1\leq j_1<j_2\leq \ell}\prod_{j=1,j\neq j_1,j_2}^{\ell}{(j-1)(j+2)\over j(j+1)}{2\over j_1(j_1+1)}{2\over j_2(j_2+1)}\\
&=2\sum_{2\leq j_2\leq \ell}\prod_{j=2}^{\ell}{(j-1)(j+2)\over j(j+1)}{2\over (j_2-1)(j_2+2)}\\
&={2(\ell+2)\over 3\ell}\Big({2\over 3}\sum_{i=1}^{\ell-1}{1\over i}-{2\over 3}\sum_{i=4}^{\ell+2}{1\over i}\Big)\\
&={22\over 27}-{4\over 27\ell}+{4\over 9(\ell+1)}-{8\over 9\ell^2}.
\end{align*}
Now using the identities \cite[Equation (3.3) and (3.4)]{Batir}
\begin{align*}
 \sum_{\ell=1}^{n+2}(-1)^\ell{n+2\choose \ell}&=-1,\qquad &\sum_{\ell=1}^{n+2}(-1)^{\ell-1}{1\over \ell+1}{n+2\choose \ell}&={n+2\over n+3},\\
 \sum_{\ell=1}^{n+2}(-1)^{\ell-1}{1\over \ell}{n+2\choose \ell}&=H_{n+2},\qquad &\sum_{\ell=1}^{n+2}(-1)^{\ell-1}{1\over \ell^2}{n+2\choose\ell}&={H^2_{n+2}+H^{(2)}_{n+2}\over 2}
 \end{align*}
we obtain
\begin{align*}
    \EE V_n^2&=1+{1\over (n+1)(n+2)}\Big(2(n+1)\sum_{\ell=1}^{n+2}(-1)^\ell{n+2\choose \ell}\Big({1\over 3}+{2\over 3\ell}\Big)\\
    &\qquad\qquad+\sum_{\ell=1}^{n+2}(-1)^\ell{n+2\choose \ell}\Big({22\over 27}-{4\over 27\ell}+{4\over 9(\ell+1)}-{8\over 9\ell^2}\Big)\Big)\\
&=1-\frac{18n^2 + 106n + 144}{27 (n+1)(n+2)(n+3)}
-{(36n+32)H_{n+2}\over 27(n+1)(n+2)}+{4(H^2_{n+2}+H^{(2)}_{n+2})\over 9(n+1)(n+2)}.
\end{align*}
\end{proof}

\section*{Acknowledgements}

AG was supported by the DFG priority program SPP 2265 \textit{Random Geometric Systems} and Germany's Excellence Strategy  EXC 2044 -- 390685587, \textit{Mathematics M\"unster: Dynamics - Geometry - Structure}.
    
\bibliography{Bibliography}

\begin{thebibliography}{10}

\bibitem{Aff88}
{\sc Affentranger, F.}
\newblock The expected volume of a random polytope in a ball.
\newblock {\em Journal of Microscopy 151}, 3 (1988), 277--287.

\bibitem{BRSZ00}
{\sc B\'ar\'any, I., Rote, G., Steiger, W., and Zhang, C.-H.}
\newblock A central limit theorem for convex chains in the square.
\newblock {\em Discrete Comput. Geom. 23}, 1 (2000), 35--50.

\bibitem{Batir}
{\sc Batir, N.}
\newblock On some combinatorial identities and harmonic sums.
\newblock {\em Int. J. Number Theory 13}, 7 (2017), 1695--1709.

\bibitem{Beck24}
{\sc Beck, D.}
\newblock On random simplex picking beyond the blashke problem, 2024.

\bibitem{BT25}
{\sc Besau, F., and Thäle, C.}
\newblock Random convex chains through the lens of analytic combinatorics,
  2025.

\bibitem{Buch84}
{\sc Buchta, C.}
\newblock {Zufallspolygone in konvexen Vielecken.}
\newblock {\em Journal für die reine und angewandte Mathematik 347\/} (1984),
  212--220.

\bibitem{Buch05}
{\sc Buchta, C.}
\newblock An identity relating moments of functionals of convex hulls.
\newblock {\em Discrete Comput. Geom. 33}, 1 (2005), 125--142.

\bibitem{Buchta06}
{\sc Buchta, C.}
\newblock The exact distribution of the number of vertices of a random convex
  chain.
\newblock {\em Mathematika 53}, 2 (2006), 247--254.

\bibitem{Buchta12}
{\sc Buchta, C.}
\newblock On the boundary structure of the convex hull of random points.
\newblock {\em Adv. Geom. 12}, 1 (2012), 179--190.

\bibitem{Buch21}
{\sc Buchta, C.}
\newblock The duality of the volumes and the numbers of vertices of random
  polytopes.
\newblock {\em Discrete Comput. Geom. 70}, 3 (2023), 951--959.

\bibitem{BR01}
{\sc Buchta, C., and Reitzner, M.}
\newblock {The convex hull of random points in a tetrahedron: Solution of
  Blaschke's problem and more general results}.
\newblock {\em Journal für die reine und angewandte Mathematik 536\/} (2001),
  1--29.

\bibitem{Efr65}
{\sc Efron, B.}
\newblock The convex hull of a random set of points.
\newblock {\em Biometrika 52\/} (1965), 331--343.

\bibitem{Groeneboom88}
{\sc Groeneboom, P.}
\newblock Limit theorems for convex hulls.
\newblock {\em Probab. Theory Related Fields 79}, 3 (1988), 327--368.

\bibitem{GRT23}
{\sc Gusakova, A., Reitzner, M., and Th\"ale, C.}
\newblock Variance expansion and {B}erry-{E}sseen bound for the number of
  vertices of a random polygon in a polygon.
\newblock {\em Ann. H. Lebesgue 6\/} (2023), 875--906.

\bibitem{GT21}
{\sc Gusakova, A., and Th\"ale, C.}
\newblock On random convex chains, orthogonal polynomials, {PF} sequences and
  probabilistic limit theorems.
\newblock {\em Mathematika 67}, 2 (2021), 434--446.

\bibitem{Klenke}
{\sc Klenke, A.}
\newblock {\em Probability theory}, second~ed.
\newblock Universitext. Springer, London, 2014.
\newblock A comprehensive course.

\bibitem{NIST}
{\sc Olver, F. W.~J., Lozier, D.~W., Boisvert, R.~F., and Clark, C.~W.}, Eds.
\newblock {\em N{IST} handbook of mathematical functions}.
\newblock U.S. Department of Commerce, National Institute of Standards and
  Technology, Washington, DC; Cambridge University Press, Cambridge, 2010.
\newblock With 1 CD-ROM (Windows, Macintosh and UNIX).

\bibitem{Pfi89}
{\sc Pfiefer, R.~E.}
\newblock The historical development of j. j. sylvester's four point problem.
\newblock {\em Mathematics Magazine 62\/} (1989), 309--317.

\bibitem{RS63}
{\sc R{\'e}nyi, A., and Sulanke, R.}
\newblock {\"U}ber die konvexe {H}{\"u}lle von n zuf{\"a}llig gew{\"a}hlten
  {P}unkten.
\newblock {\em Zeitschrift f{\"u}r Wahrscheinlichkeitstheorie und Verwandte
  Gebiete 2\/} (1963), 75--84.

\bibitem{Schneider2008}
{\sc Schneider, R.}
\newblock Recent results on random polytopes.
\newblock {\em Bollettino dell'Unione Matematica Italiana 1}, 1 (2 2008),
  17--39.

\bibitem{Syl64}
{\sc Sylvester, J.}
\newblock {Question 1491}.
\newblock {\em {Mathematical questions and their solutions from the Educational
  Times}\/} (1864).

\end{thebibliography}

\end{document}